\newcommand{\D}{\mathbb{D}}   
\newcommand{\eps}{\varepsilon}
\newcommand{\N}{\mathbb{N}}   
\newcommand{\Q}{\mathbb{Q}}
\newcommand{\E}{\mathbb{E}}
\newcommand{\Z}{\mathbb{Z}}
\newcommand{\R}{\mathbb{R}}
\newtheorem{theorem}{Theorem}[section]
\newtheorem{proposition}[theorem]{Proposition}
\newtheorem{remark}[theorem]{Remark}
\newtheorem{corollary}[theorem]{Corollary}
\newtheorem{definition}[theorem]{Definition}
\newtheorem{lemma}[theorem]{Lemma}
\newtheorem{assumption}[theorem]{Assumption}
\makeatletter\@addtoreset{equation}{section}\makeatother
\renewcommand{\section}{%
  \@startsection
      {section}
      {1}
      {0em}
      {\baselineskip}
      {0.5\baselineskip}
      {\normalfont\large\bfseries}}
\renewcommand{\subsection}{%
  \@startsection
      {subsection}
      {2}
      {0em}
      {\baselineskip}
      {0.25\baselineskip}
      {\normalfont\bfseries}}
\begin{document}

\title[Localization for RWRE]{Localization at the boundary for conditioned random walks in random environment in dimensions two and higher}

\author{Rodrigo Bazaes}
\address[Rodrigo Bazaes]{Facultad de Matem\'aticas\\
Pontificia Universidad Cat\'olica de Chile\\
Vicu\~na Mackenna 4860, Macul\\
Santiago, Chile}

\email{rebazaes@mat.uc.cl}

\thanks{The author has been supported by ANID-PFCHA/Doctorado Nacional/
2018-21180873.}
\thanks{\it Key words and phrases: random walks ; random environment ; localization.} 
\thanks{ AMS 2020 {\it subject classifications}. 60K37;82C41 ; 82D30}

\begin{abstract}
	We introduce the notion of \emph{localization at the boundary} for conditioned random walks in  i.i.d. and uniformly elliptic 
	random environment on $\Z^d$, in dimensions two and higher. Informally, this means that the walk spends a non-trivial amount of 
	time at some point $x\in \mathbb{Z}^{d}$ with $|x|_{1}=n$ at time $n$, for $n$ large enough. In dimensions two and three, we 
	prove localization for (almost) all walks. In contrast, for $d\geq 4$ there is a phase-transition for environments of the form $
	\omega_{\eps}(x,e)=\alpha(e)(1+\eps\xi(x,e))$, where $\{\xi(x)\}_{x\in \Z^{d}}$ is an i.i.d. sequence of random  variables, and 
	$\eps$ represents the amount of disorder with respect to a simple random walk. The proofs involve a criterion that connects 
	localization with the equality or difference between the quenched and annealed rate functions at the boundary.\end{abstract}

\maketitle

\section{Introduction and background}

Random walk in random environment (RWRE) is a fundamental model in probability used as a
prototype for various phenomena. Examples of this include DNA chain replication 
\cite{chernov1967replication}, crystal growth \cite{temkin1969theory}, among others. This model was 
introduced in the '70s to study motion in random media. In dimension $d=1$, the model is well 
understood. Some of the known results include transience, recurrence, law of large numbers 
(\cite{solomon1975rwre},\cite{alili1999rwre}), and large deviations (\cite{greven1994largedeviation}, 
\cite{comets2000quenched}), among others. However, when $d\geq 2$, there are several open questions, 
including how to characterize precisely when the walk is transient or recurrent, or whether directional 
transience implies ballisticity. We refer the reader to the references \cite{drewitz2014selected} and 
\cite{zeitouni2004rwrelectures} for a complete presentation of the model. \\
In this paper, we deal with the notion of localization.
Informally, we say that the walk is \emph{localized} if its asymptotic trajectory is confined to some region 
with positive probability. Otherwise, we say that it is \emph{delocalized}. For RWRE, this has been studied almost 
entirely in the one-dimensional case (see, for example, the works of Sinai \cite{sinai1982rwre} and 
Golosov \cite{golosov1984localization}). When the dimension is two or higher, the topic has been 
practically untouched (\cite{berger2016locallimit} and 
\cite{deuschel2017quenched} are two somewhat related articles). To 
motivate this concept, consider first a simple random walk (SRW)  $(S_{n})_{n\in \N}$ on $\Z^{d}$, 
conditioned to reach the boundary at time $n$, that is, $|S_{n}|_{1}=n$ for each $n\in\N$. This 
walk is an example of delocalization since it presents a diffusive behavior. The natural question is to 
ask if the same situation continues to happen if we perturb the walk in some (random) directions. 
It turns out that the introduction of a small disorder can change the walk's typical paths so that the perturbed 
walk has a \emph{favorite trajectory} that it is likely to visit. That is a 
good reason to study the localization/delocalization phenomena for the RWRE model since the disorder 
can be introduced naturally. In the previous example, we can consider environments of the type 

\begin{equation}\label{eq:srw_pert}
\omega_{\eps}(x,e)=\alpha(e)(1+\eps\xi(x,e))
\end{equation}

\noindent where $(\xi(x,\cdot))_{x\in\Z^{d}}$ is an i.i.d. family of mean-zero random variables, and $
(\alpha(e))_{e\in V}$ are nonnegative numbers such that $\sum_{e\in V}\alpha(e)=1$. Under this 
setting, the question is whether there is localization or delocalization for a given $\eps$. As the case $\eps=0$ 
corresponds to delocalization, one foresees that this will also be the case under a low disorder, and for large 
enough disorder, the opposite may occur. Thus, one might expect the existence of a 
\emph{phase transition} in terms of the parameter $\eps$. That result is proved in 
Proposition \ref{th:main2}. However, the phase transition may be "trivial" in two ways:

\begin{enumerate}
	\item There is only delocalization at $\eps=0$. In other words, the walk is only localized unless it is deterministic. We show in Theorem \ref{th:main1} that this is case if $d=2$ or $3$. Not only that, but any\footnote{More precisely, any RWRE that satisfies both Assumptions \ref{assumpt:iid_ue} and \ref{ass:env_condition}, see below.} (non-deterministic) RWRE will be localized.
	\item There is always delocalization. If $d\geq 4$, we show that the previous situation cannot hold, namely, only localization. Actually, the opposite may take place. Nonetheless, we show in Subsection \ref{sec:example} examples when a genuine phase transition occurs. 
\end{enumerate}

The notion of localization/delocalization is closely related to the equality or difference between the quenched and averaged 
large deviations for RWRE at the boundary. Without being completely rigorous for now, consider a face $F$ of the set 
$\mathbb{D}:=\{x\in \R^{d}:|x|_{1}=1 \}$. If $I_{q}$ and $I_{a}$ are the quenched and annealed rate 
functions for an RWRE (cf. Eq. \ref{eq:quenched_ldp} for the definition), then in 
Theorem \ref{th:loc_equiv_crit} 
we show that localization  in the face  $F$ is equivalent to 

\begin{equation}
\label{eq:inf_rate_funct_ineq} \inf_{x\in F}I_{a}(x)<\inf_{x\in F}I_{q}(x)
\end{equation}
\noindent  and delocalization in the same face corresponds to the equality in \eqref{eq:inf_rate_funct_ineq}. 
 This criterion is one of the crucial results since the annealed 
rate function at the boundary can be computed explicitly (cf. Remark 2.7 in 
\cite{bazaes2019rwredisorder}). Even though the quenched rate function has not an easy explicit formula 
(cf. Theorem 2 in \cite{rosenbluth2006thesis} ), one can obtain estimates for the quenched 
infimum in Eq. \ref{eq:inf_rate_funct_ineq} that ensures the strict inequality in the same 
equation. In Subsection \ref{sec:example} we exploit this fact to show a part of Proposition \ref{th:main2}.

To finish this introduction, we mention that in the model of \emph{directed polymers in random 
environment}, the path localization of the walk has been studied vigorously, and several remarkable 
results have been obtained in the last two decades (cf. \cite{comets2003pathlocalization}, 
\cite{auffinger2011polymerstail}, \cite{bates2018localization} to select a 
few of them). The lectures notes \cite{comets2017directedlectures} contains an updated account of some 
of these articles. 

\subsection{Definitions}
Fix $d\in \N$, the dimension where the walk moves. For $x\in 
\R^{d}$ and $p\in [1,\infty]$, denote by $|x|_p$ its $\ell_p$  norm. Define $V:=\{x\in \Z^{d}:|x|
_{1}=1 \}=\{\pm e_{1},\cdots,\pm e_{d} \}$ the set of allowed jumps of the walk (as usual, $e_{i}$ is the 
vector with zero coordinates excepting the one in the ith position). Next, define $\mathcal{P}$ as the 
set of nearest neighbors probability vectors, that is, 

\begin{equation*}
\mathcal{P}:=\{p:V\to[0,1]:\sum_{e\in V}p(e)=1 \}.
\end{equation*}

\noindent  Now we can define the \emph{environments}. An environment is an element $\omega$ in the space 

\begin{equation*}
\Omega:=\{\omega:\Z^{d}\times V\to [0,1]:\omega(x)\in \mathcal{P}\text{ for all } x\in \Z^{d} \}
=\mathcal{P}^{\Z^{d}}.
\end{equation*} 

\noindent  We usually write $\omega=\{\omega(x,e)\}_{x\in\Z^{d},e\in V}$. Finally, we can define a \emph{random 
walk in the random environment} $\omega\in \Omega$  starting at a point $x\in \Z^{d}$ as the Markov 
chain  $X=(X_{n})_{n\in \N}$ with law $P_{x,\omega}$ that satisfies 

\begin{equation}\label{eq:rwre_def}
\begin{aligned}
P_{x,\omega}(X_{0}=x)&=1,\\
P_{x,\omega}(X_{n+1}=y+e|X_{n}=y)&=\omega(y,e),~n\geq 0, y\in \Z^{d},e\in V.
\end{aligned}
\end{equation}

\noindent  The measure $P_{x,\omega}$ in the literature is known as the \emph{quenched measure}, in contrast to 
the \emph{annealed (or averaged) measure} that we describe next.\\
Equip the space $\Omega$ with the Borel $\sigma-$algebra $\mathbb{B}(\Omega)$, and consider a probability 
measure $\mathbb{P}$ on $(\Omega,\mathbb{B}(\Omega))$. The annealed measure $P_{x}$ of the RWRE starting at $x\in 
\Z^{d}$ is defined as the measure on $\Omega\times (\Z^{d})^{\N}$ that satisfies 

\begin{equation}\label{eq:rwre_ann_measure}
P_{x}(A\times B)=\int_{A}P_{x,\omega}(B)d\mathbb{P}
\end{equation}

\noindent  for each $A\in \mathbb{B}(\Omega)$ and $B\in \mathbb{B}((\Z^{d})^{\N})$, where $\mathbb{B}(\Omega), \mathbb{B}((\Z^{d})^{\N})$ 
are the Borel $\sigma$-algebras of $\Omega$ and $
(\Z^{d})^{\N}$ respectively. Expectations with respect to $P_{x,\omega},P_{x}$ and $\mathbb{P}$   are 
denoted by $E_{x,\omega},E_{x}$ and $\E$ respectively. 
The basics assumptions in this work are the following:

\begin{assumption}\label{assumpt:iid_ue}\leavevmode
 \begin{enumerate}[label=(\roman*)] 
	\item  The random vectors $\{\omega(x,\cdot) \}_{x\in \Z^{d}}$ are i.i.d under $\mathbb{P}$.
	\item  Uniform ellipticity: there exists a $\kappa>0$ such that for every $x\in \Z^{d}$ and $e\in V$,
		\begin{equation}\label{eq:rwre_ue}
			\mathbb{P}(\omega(x,e)\geq \kappa)=1.
		\end{equation}
\end{enumerate}
\end{assumption}
The two assumptions above are common in the literature. In particular, under assumption $(i)$, we can define \begin{equation*}
q(e):=\E[\omega(0,e)]=\E[\omega(x,e)],~~x\in \Z^{d},e\in V.
\end{equation*}

\subsection{Localization at the boundary}
 We will look at trajectories $(X_n)_{n\in \N}$ 
of an RWRE such that $|{X_{n}}|_{1}=n$ for each $n$, and study the asymptotic behavior of the \emph{normalized} quenched probability 
of reaching the boundary at time $n$, that is, if $x\in \Z^{d}$ satisfies $|x|_{1}=n$, 
\begin{equation}\label{eq:quenched_normalized}
P_{0,\omega}\left(X_{n}=x~ \vert ~|X|_{1}=n\right).
\end{equation}

\noindent Specifically, we are concerned in 
knowing if for some sequence $(x_{n})_{n\in \N}\subset \Z^{d}$ such that $|x|_{1}=n$ for all $n$, the 
quenched probability \eqref{eq:quenched_normalized} is greater than some constant $c$, uniformly on 
$n$. In this case, the conditioned walk is "localized" around this path (the rigorous definition 
appears in Definition \ref{def:localization} below). There is a counterpart in the literature of directed 
polymers in random environment (c.f. \cite{comets2017directedlectures}, page 88). In this 
model, there is a nice characterization of localization/delocalization depending on the disorder of the 
environment. For RWRE, the disorder measures how far is the environment $\omega(0,e)$ from its 
expectation $q(e)$. This allows us to obtain analogous results in our case. \\
At this point, we proceed to define localization rigorously. We decompose $\partial\D$ in faces 
$\partial \D(s),s\in \{-1,1\}^{d}$, defined by 

\begin{equation}\label{eq:faces_def}
\partial \D(s):=\{x\in \partial D:s_{j}x_{j}\geq 0,j=1,\cdots, d \}.
\end{equation}

\noindent Without loss of generality, from now on we consider only $\partial \D^{+}:=\partial\D(\overline{s})$, 
where $\overline{s}:=(1,1,\cdots 1)$. We define the allowed jumps by 

\begin{equation*}
V^{+}:=\{e_1,\cdots, e_d\} \subseteq V.
\end{equation*}

\noindent Next, we consider the set 

\begin{equation*}
\partial R_{n}:=n\partial \D^{+}=\{ x\in \Z^{d}: |x|_{1}=n, x_{j}\geq 0\text{ for all } j\in 
\{1,\cdots, d \}\}
\end{equation*}

\noindent and define $R_{n}$ as the sets of all paths $(z_{0},z_{1},\cdots,z_{n})\in (\Z^{d})^{n+1}$ for which 
$z_{0}=0$ and $z_{n}\in \partial R_{n}$. Note that this happens if and only if $\triangle z_{i}:=z_{i}-z_{i-1}\in V^{+}$ 
for each $i=1,\cdots,n$.\\  
\noindent We also consider the sequence  $(J_{n})_{n\in \N}$ defined by $J_{1}:=1$, and for $n\geq 2$,
\begin{equation}\label{eq:loc_Jn_def}
J_{n}:=\max_{x\in \Z^d}P_{0,\omega}(X_{n-1}=x\vert \mathcal{A}_{n-1}),
\end{equation}

\noindent where $\mathcal{A}_{n}:=\{X_{n}-X_0\in \partial R_{n} \}$.\\

\begin{definition}\label{def:localization} Given an RWRE $(X_{n})_{n\in \N}$, we say that it is localized 
at the boundary if \begin{equation}\label{eq:loc_def}
\liminf_{n\to \infty}\frac{1}{n}\sum_{k=1}^{n}J_{n}>0~~\mathbb{P}-a.s.
\end{equation}

\noindent Similarly, the RWRE is delocalized at the boundary if  

\begin{equation}\label{eq:deloc_def}
\liminf_{n\to \infty}\frac{1}{n}\sum_{k=1}^{n}J_{n}=0~~\mathbb{P}-a.s.
\end{equation}
\end{definition}

\noindent Note that \emph{a priori}, the walk can be neither localized nor delocalized. However, in 
Theorem \ref{th:loc_equiv_crit}, we show that this cannot happen for walks that satisfy Assumption 
\ref{assumpt:iid_ue}.

\subsubsection{ A different formulation}
Working on the boundary induces a \emph{polymer-like} interpretation that makes more transparent the argument we use 
below. Given $\omega\in \Omega, x\in \Z^d$, and $e\in V^+$, define 

\begin{equation}\label{def:pi_def}
\pi(\omega,x,e):=\frac{\omega(x,e)}{\sum_{e'\in V^+}
\omega(x,e')},~~\Psi(\omega,x):=\log\left(\sum_{e\in V^+}\omega(x,e)\right)
\end{equation} 

\noindent Then, $\omega(x,e)=\pi(\omega,x,e)e^{\Psi(\omega,x)}$, and $\pi$ induces an RWRE, with $V^+$ as the set of 
allowed jumps. Denote by $P_{x,\pi}$ for the quenched measure starting at $x\in \Z^d$, and $E_{x,\pi}$ its 
expectation. Therefore, for fixed $n\in \N$ and $A\in  \mathbb{B}((\Z^{d})^{\N})$,

\begin{equation}\label{eq:omega_to_pi_rel}
P_{0,\omega}(A, X_n\in \partial R_n)=E_{0,\pi}\left(e^{\sum_{i=0}^{n-1}\Psi(\omega,X_i)}, A\right).	
\end{equation}

\noindent This identity leads to define a quenched \emph{polymer measure} $P_{x,n}^{\omega}$ defined by 

\begin{equation}\label{def:queched_polymer_measure}
P_{x,n}^{\omega}(A):=\frac{E_{0,\pi}\left(e^{\sum_{i=0}^{n-1}\Psi(\omega,X_i)}, A\right)}{E_{0,\pi}\left(e^{\sum_{i=0}
^{n-1}\Psi(\omega,X_i)}\right)},~~ A\in  \mathbb{B}((\Z^{d})^{\N})
\end{equation}

\noindent This resembles the general framework introduced in \cite{rassoul2013quenched}.

\noindent Using the polymer measure, it is direct to verify the identity 
\begin{equation*}
J_n= \max_{x\in \Z^d}P_{0,n-1}^{\omega}(X_{n-1}=x)	
\end{equation*}

\noindent From now on, we use this scheme (except in Subsection \ref{sec:example}), although, of 
course, both definitions are equivalent. 

\subsection{Main results}
The main results of this paper are that localization holds for (almost) all uniformly elliptic and i.i.d 
environments in dimensions two and three, and a phase transition in terms of the disorder in dimensions $d\geq 4$.\\
The following condition will play a remarkable role in the results that follow.

\begin{assumption}\label{ass:env_condition}
The measure $\mathbb{P}$ satisfies 
\begin{equation}\label{eq:env_condition}
\mathbb{P}(\Psi(\omega,0)=\log(c)) <1,
\end{equation}

\noindent where $c:=\sum_{e\in V^+}q(e)$
\end{assumption}

\begin{theorem}\label{th:main1}
 Let $(X_{n})_{n\in \N}$ be an RWRE that satisfies Assumption \ref{assumpt:iid_ue}, and $d\in \{2,3\}$. If 
 Assumption \ref{ass:env_condition} holds, then there is localization. Otherwise, there is delocalization. 
 Delocalization also holds for any dimension $d\geq 2$ when Assumption \ref{ass:env_condition} is not satisfied.
\end{theorem}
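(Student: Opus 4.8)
The plan is to derive everything from the localization criterion of Theorem~\ref{th:loc_equiv_crit}, an explicit evaluation of the annealed boundary rate function, and a directed-polymer analysis of the quenched one. By Theorem~\ref{th:loc_equiv_crit}, localization (resp.\ delocalization) at the face $\partial\D^{+}$ amounts to the strict inequality (resp.\ equality) $\inf_{\partial\D^{+}}I_{a}<\inf_{\partial\D^{+}}I_{q}$. For the annealed side, write $W_{n}:=P_{0,\omega}(X_{n}\in\partial R_{n})=\sum_{\gamma\in R_{n}}\prod_{i=0}^{n-1}\omega(\gamma_{i},\triangle\gamma_{i+1})$; since a directed path visits pairwise distinct sites, Assumption~\ref{assumpt:iid_ue}$(i)$ gives $\E[W_{n}]=\sum_{\gamma\in R_{n}}\prod_{i=0}^{n-1}q(\triangle\gamma_{i+1})=c^{n}$, whence $\inf_{\partial\D^{+}}I_{a}=-\log c$ (this is also Remark~2.7 in \cite{bazaes2019rwredisorder}). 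For the quenched side, the quenched large deviation principle at the boundary yields $\inf_{\partial\D^{+}}I_{q}=-\lim_{n}\frac{1}{n}\log W_{n}$, $\mathbb{P}$-a.s. Setting $\widetilde W_{n}:=c^{-n}W_{n}$, the extra factor $\sum_{e\in V^{+}}\omega(x,e)$ acquired when passing from slice $n$ to slice $n+1$ is independent of the past and has mean $c$, so $\widetilde W_{n}$ is a nonnegative martingale of mean $1$ for the filtration $\sigma(\omega(x,\cdot):|x|_{1}<n)$; its free energy $F:=\lim_{n}\frac{1}{n}\log\widetilde W_{n}$ exists $\mathbb{P}$-a.s., is deterministic, and satisfies $F\le0$ by Jensen (as $\E[\widetilde W_{n}]=1$). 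Hence the theorem reduces to the two assertions: $F=0$ whenever Assumption~\ref{ass:env_condition} fails (any $d\ge2$), and $F<0$ when $d\in\{2,3\}$ and Assumption~\ref{ass:env_condition} holds.

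The first (soft) half: if Assumption~\ref{ass:env_condition} fails then $\mathbb{P}$-a.s.\ $\Psi(\omega,x)=\log c$ for every $x\in\Z^{d}$, so by \eqref{eq:omega_to_pi_rel}, $W_{n}=E_{0,\pi}\bigl[e^{\sum_{i=0}^{n-1}\Psi(\omega,X_{i})}\bigr]=c^{n}$ identically; thus $\widetilde W_{n}\equiv1$, $F=0$, and delocalization follows in \emph{every} dimension $d\ge2$.

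The second (main) half: factor $\omega(x,e)=\bar q(e)\,c\,h(\omega,x,e)$ with $\bar q(e):=q(e)/c$, which by uniform ellipticity is a nondegenerate probability vector on $V^{+}$, and $h(\omega,x,e):=\omega(x,e)/q(e)$. Then $\widetilde W_{n}=E^{\bar q}_{0}\bigl[\prod_{i=0}^{n-1}h(\omega,X_{i},\triangle X_{i+1})\bigr]$, where $E^{\bar q}_{0}$ denotes the law of the deterministic-transition directed walk that jumps to $e\in V^{+}$ with probability $\bar q(e)$; this exhibits $\widetilde W_{n}$ as the partition function of a directed polymer in dimension $1+(d-1)$ with bounded, mean-one bond disorder $\{h(\omega,x,\cdot)\}_{x}$, i.i.d.\ across sites (and correlated within a site). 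Two facts drive the argument. First, centered by its drift $\bar q$, the reference walk is a genuine mean-zero random walk with i.i.d.\ increments and nondegenerate covariance on the $(d-1)$-dimensional sublattice $\{v\in\R^{d}:\sum_{j}v_{j}=0\}$; as $d-1\le2$ it is recurrent, so the expected replica overlap $\sum_{k\le n}\sum_{x}P^{\bar q}_{0}(X_{k}=x)^{2}$ diverges (order $\sqrt{n}$ if $d=2$, order $\log n$ if $d=3$). Second, the disorder is genuinely present exactly under Assumption~\ref{ass:env_condition}: setting $g(\omega,x):=\sum_{e\in V^{+}}\bar q(e)h(\omega,x,e)=e^{\Psi(\omega,x)-\log c}$, if $g\equiv1$ a.s.\ then resumming the innermost jump telescopes $\widetilde W_{n}$ to $1$ (the first half), while Assumption~\ref{ass:env_condition} is precisely $\mathrm{Var}\bigl(g(\omega,0)\bigr)>0$. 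Given these inputs, $F<0$ follows from the very-strong-disorder phenomenon for directed polymers in dimensions $1+1$ and $1+2$ (Comets--Vargas for a one-dimensional reference walk; Lacoin for a two-dimensional one; see also \cite{comets2017directedlectures}): one proves $\E[\widetilde W_{N}^{\theta}]\le e^{-\delta N}$ for suitable $\theta\in(0,1)$ and $\delta>0$, via the fractional-moment / coarse-graining / change-of-measure scheme, the change of measure tilting the variables $g(\omega,\cdot)$ over a coarse-grained block.

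The bulk of the work, and the main obstacle, is this last step, especially the case $d=3$ (effective dimension two), where the second-moment method is inconclusive (the overlap diverges only logarithmically) and the delicate coarse-graining argument is needed. Beyond transcribing the known scheme, care is required to: (i) run the change of measure at the level of the column sums $g(\omega,x)$, equivalently the $\pi$-averaged bond disorder, bounding its relative-entropy cost using only $\mathrm{Var}(g(\omega,0))>0$, i.e.\ Assumption~\ref{ass:env_condition}, together with the boundedness granted by uniform ellipticity; and (ii) accommodate the within-site correlations of $\{h(\omega,x,e)\}_{e\in V^{+}}$ and the drift of the reference walk, neither of which affects recurrence of the centered walk nor the overlap asymptotics. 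The annealed computation and the first half are, by contrast, routine.
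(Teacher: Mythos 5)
Your proposal follows essentially the same route as the paper: the criterion from Theorem~\ref{th:loc_equiv_crit}, the mean-one martingale $W_n$ (your $\widetilde W_n$), the soft delocalization argument when $\Psi(\omega,\cdot)\equiv\log c$, and for $d\in\{2,3\}$ the fractional-moment/coarse-graining/change-of-measure scheme of Lacoin and Yilmaz--Zeitouni, tilting the centered site variables $g(\omega,x)-1$ (equivalently $\tilde\omega(x)=e^{\Psi(\omega,x)}-c$) over blocks along the drift line. Your reformulation via the deterministic reference walk $\bar q$ and bond disorder $h$, reduced to the effective site disorder $g(\omega,x)=\sum_e\bar q(e)h(\omega,x,e)$, is equivalent to the paper's $(\pi,\Psi)$ decomposition and merely makes the directed-polymer structure more explicit; both accounts defer the detailed change-of-measure estimates to the cited references.
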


A related result in RWRE appears in the article \cite{yilmaz2010differing} of Yilmaz and Zeitouni. 
They show that for walks that satisfy certain ballisticity condition\footnote{In the article above,
it is used the so-called condition (T). This condition is equivalent to the ballisticity conditions (T') 
and $\mathscr{P}_{M}$, as showed in \cite{guerra2020proof}} , there is a class of measures $
\mathbb{P}$, such that the quenched and annealed rate functions differ in a neighborhood of the LLN 
velocity. \\
In the directed polymer model, Comets and Vargas \cite{comets2006majorizing} prove localization in 
dimension $1+1$ (one dimension for time, and one for space), while Lacoin \cite{lacoin2010newbounds} 
proves localization in dimension $1+2$. Berger and Lacoin improved this result in 
\cite{berger2017highTemp}, where they gave the precise asymptotic behavior for the difference between the quenched 
and annealed free energies, as $n\to \infty$.\\

For $d\geq 4$, we consider a certain family of environments, parameterized by $\eps\in [0,1)$. This 
parameter represents how much the distribution of the jumps in an RWRE differs from a simple 
random walk. \\
First, fix a probability vector $\alpha= (\alpha(e))_{e\in V}$ with strictly positive entries. Define 

\begin{equation}\label{def:E_alpha}
\mathcal{E}_{\alpha}:=\big\{(r(e))_{e\in V}\in [-1,1]^{V}:\sum_{e\in V}r(e)\alpha(e)=0,\sup_{e\in V}|r(e)|=1\big\}	
\end{equation}
and consider a probability measure $\Q$ on $\Gamma_{\alpha}:=\mathcal{E}_{\alpha}^{\mathbb{Z}^d}$ (also fixed from 
now). Next, pick an i.i.d family of random variables $(\xi(x))_{x\in \Z^d}\in \Gamma_{\alpha}$ such that $
\E[\xi(x,e)]=0$ for all $e\in V$. Finally, given $\eps\in [0,1)$, define the environments $
(\omega_{\eps}(x))_{x\in \Z^d}$ as 

\begin{equation}\label{def:param_env}
\omega_{\eps}(x,e):=\alpha(e)(1+\eps\xi(x,e))
\end{equation}

\noindent This framework was originally used in \cite{bazaes2019rwredisorder} to study a phase 
transition of the map 

\begin{equation*}
\eps\to I_{a}(x,\cdot)-I_{q}(x,\cdot),
\end{equation*}

\noindent where $I_{q}(x,\cdot), I_{a}(x,\cdot)$ are the quenched (respectively annealed) rate 
functions of an RWRE in the environment $\omega_{\eps}$. 
The study of RWRE at low disorder has also been considered in 
\cite{sznitman2003newexamples}, \cite{sabot2004ballistic}, among others.\\
Notice that for fixed $\eps\in [0,1)$, if we denote by $\mathbb{P}_{\eps}$ to the law of $\omega_{\eps}
$, then this 
measure is uniformly elliptic with constant $\kappa=(1-\eps)\min_{e\in V}\alpha(e)$. Conversely, for 
fixed $\kappa<\frac{1}{\min_{e\in V}\alpha(e)}$, we define  $\eps_{max}:=1-\frac{\kappa}{\min_{e\in V}
\alpha(e)}$, the maximum parameter so that for all $\eps\leq \eps_{max}$, $\mathbb{P}_{\eps}$ is 
uniformly elliptic with constant $\kappa$.\\
The last result of the paper is the phase transition for localization/delocalization for parametrized 
environments. We say that an RWRE is $\eps$-localized (resp. delocalized) if \eqref{eq:loc_def} (resp. 
\eqref{eq:deloc_def}) holds under the measure $\mathbb{P}_\eps$.

\begin{proposition}\label{th:main2}
For $d\geq 2$, $\alpha= (\alpha(e))_{e\in V}$, $\Q$  and $\kappa$ fixed, there exists $\overline{\eps}
\in [0,\eps_{max}]$ such that the walk is $\eps$-localized for $0\leq \eps\leq \overline{\eps}$, and $
\eps$-delocalized for $\overline{\eps}<\eps\leq \eps_{max}$. Moreover,
\begin{enumerate}[label=(\roman*)]
		\item If Assumption \ref{ass:env_condition} does not hold, then $\overline{\eps}=\eps_{max}$. Otherwise,
		\item  if $d=2$ or 3, then $\overline{\eps}=0$;
		\item if $d\geq 4, \overline{\eps}>0$. Also, there are examples of walks that satisfy $\overline{\eps}<\eps_{max}$.
	\end{enumerate}
\end{proposition}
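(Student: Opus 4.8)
The plan is to reduce the whole statement, via the criterion of Theorem \ref{th:loc_equiv_crit}, to the monotonicity of a single scalar $\delta(\eps)$, and then treat the three regimes separately. \textbf{Reduction.} Since $\E[\omega_\eps(0,e)]=\alpha(e)$ for every $\eps$ and every path reaching $\partial R_n$ is monotone (hence visits pairwise distinct sites), the annealed weight of such a path factorises over its steps; therefore $P_0(X_n\in\partial R_n)=c^{\,n}$, and more generally $I_a$ restricted to $\partial\D^+$ — in particular $\inf_{\partial\D^+}I_a$ — does not depend on $\eps$. On the quenched side I would fix the reference walk $\tilde\pi(e):=\alpha(e)/c$ on $V^+$ and rewrite \eqref{eq:omega_to_pi_rel} accordingly, so that $P_{0,\omega_\eps}(X_n\in\partial R_n)=c^{\,n}Z_n^{\omega,\eps}$ with $Z_n^{\omega,\eps}:=E_{0,\tilde\pi}\big[\prod_{i=1}^n(1+\eps\,\xi(X_{i-1},\triangle X_i))\big]$. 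By the subadditive arguments of \cite{rassoul2013quenched}, $p(\eps):=\lim_n\tfrac1n\log Z_n^{\omega,\eps}$ exists $\mathbb{P}_\eps$-a.s.\ and is deterministic, and since $\E[Z_n^{\omega,\eps}]=1$ (the increments are mean one and live at distinct sites) Jensen gives $p(\eps)\le 0$; a standard large-deviation computation identifies $\inf_{\partial\D^+}I_q^\eps-\inf_{\partial\D^+}I_a=-p(\eps)=:\delta(\eps)\ge 0$. Thus, by Theorem \ref{th:loc_equiv_crit}, the walk is $\eps$-localized iff $\delta(\eps)>0$ and $\eps$-delocalized iff $\delta(\eps)=0$.

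\textbf{The threshold.} One has $\delta(0)=0$ (as $Z_n^{\omega,0}\equiv 1$), and $\eps\mapsto p(\eps)$ is continuous on $[0,\eps_{max}]$ because $|\partial_\eps\tfrac1n\log Z_n^{\omega,\eps}|\le(1-\eps_{max})^{-1}$ uniformly in $n,\omega$ (since $|\xi|\le1$ and $1+\eps\xi\ge1-\eps_{max}>0$). The crucial claim is that $\eps\mapsto\delta(\eps)$ is non-decreasing. To prove it, fix $0\le\eps_1<\eps_2\le\eps_{max}$ and let $\nu$ be the common law of $\xi(x,\cdot)$ on $\mathcal E_\alpha$; as $\nu$ has mean zero, scaling by $\eps_1/\eps_2$ dominates $\nu$ in the convex order, so by Strassen's theorem there is, independently at each site $x$, a coupling of $\tfrac{\eps_1}{\eps_2}\xi^{(1)}(x,\cdot)$ and $\xi^{(2)}(x,\cdot)$ — both marginally $\sim\nu$ — with $\E[\xi^{(2)}(x,e)\mid\xi^{(1)}(x,\cdot)]=\tfrac{\eps_1}{\eps_2}\xi^{(1)}(x,e)$. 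Because a monotone path visits distinct sites, conditioning on $\mathcal H:=\sigma(\xi^{(1)}(x,\cdot):x\in\Z^d)$ and using conditional independence across sites gives $\E[Z_n^{\omega,\eps_2}\mid\mathcal H]=Z_n^{\omega,\eps_1}$, whence $\E[\log Z_n^{\omega,\eps_1}]\ge\E[\log Z_n^{\omega,\eps_2}]$ by conditional Jensen, and letting $n\to\infty$ yields $p(\eps_1)\ge p(\eps_2)$. Consequently $\overline\eps:=\sup\{\eps\in[0,\eps_{max}]:\delta(\eps)=0\}$ is well defined, $\{\delta=0\}=[0,\overline\eps]$, and the walk is delocalized on $[0,\overline\eps]$ and localized on $(\overline\eps,\eps_{max}]$. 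This monotonicity is the main obstacle, and the Strassen/martingale-coupling device is precisely what resolves it; it relies essentially on the disorder entering through bounded, mean-zero, multiplicative factors $1+\eps\xi$.

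\textbf{Cases (i) and (ii).} If Assumption \ref{ass:env_condition} fails for the family, then $\sum_{e\in V^+}\alpha(e)\xi(0,e)\equiv0$, so $\Psi(\omega_\eps,0)=\log c$ $\mathbb{P}_\eps$-a.s.\ for all $\eps$; using \eqref{eq:omega_to_pi_rel} with the $\omega_\eps$-derived walk then gives $P_{0,\omega_\eps}(X_n\in\partial R_n)=c^{\,n}$ exactly, so $\delta(\eps)\equiv0$ (equivalently, the last clause of Theorem \ref{th:main1} applies) and $\overline\eps=\eps_{max}$. If instead $d\in\{2,3\}$ and Assumption \ref{ass:env_condition} holds, then for each $\eps\in(0,\eps_{max}]$ the measure $\mathbb{P}_\eps$ satisfies Assumption \ref{assumpt:iid_ue} (i.i.d.\ by construction, uniformly elliptic with constant $(1-\eps)\min_e\alpha(e)$) and Assumption \ref{ass:env_condition}, so Theorem \ref{th:main1} forces $\eps$-localization; combined with delocalization at $\eps=0$ this gives $\overline\eps=0$.

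\textbf{Case (iii), $d\ge4$.} First $\overline\eps>0$: $W_n^\eps:=Z_n^{\omega,\eps}$ is a mean-one martingale, and $\E[(W_n^\eps)^2]=E^{\otimes2}_{\tilde\pi}\big[\prod_{m<n:\,X_m=X'_m}\!\big(1+\eps^2\E[\xi(0,\triangle X_{m+1})\xi(0,\triangle X'_{m+1})]\big)\big]\le E^{\otimes2}_{\tilde\pi}\big[(1+\eps^2)^{N_\infty}\big]$, where $N_\infty=\sum_{m\ge0}\mathbbm{1}\{X_m=X'_m\}$ is the collision number of two independent $\tilde\pi$-walks; since these live on the hyperplane $\{|x|_1=m\}$, $\mathbb{P}^{\otimes2}_{\tilde\pi}(X_m=X'_m)\asymp m^{-(d-1)/2}$, which is summable for $d\ge4$, so $E^{\otimes2}_{\tilde\pi}[(1+\eps^2)^{N_\infty}]<\infty$ for $\eps$ small, the martingale is $L^2$-bounded, $W_n^\eps\to W_\infty^\eps$ with $\E[W_\infty^\eps]=1$, and the usual $0$--$1$ argument gives $W_\infty^\eps>0$ a.s., i.e.\ $p(\eps)=0$ for small $\eps>0$. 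Second, to get $\overline\eps<\eps_{max}$ I would use a fractional moment: subadditivity of $x\mapsto x^\theta$ ($0<\theta<1$) and independence over sites give $\E[(Z_n^{\omega,\eps})^\theta]\le\Lambda(\theta,\eps)^n$ with $\Lambda(\theta,\eps)=\sum_{e\in V^+}\tilde\pi(e)^\theta\E[(1+\eps\,\xi(0,e))^\theta]$, so $p(\eps)\le\tfrac1\theta\log\Lambda(\theta,\eps)$ and $\delta(\eps)>0$ whenever $\Lambda(\theta,\eps)<1$ for some $\theta$; since $\theta\mapsto\Lambda(\theta,\eps)$ is convex with $\Lambda(1,\eps)=1$ and $\Lambda(0,\eps)=|V^+|>1$, this holds iff $-\sum_{e\in V^+}\tilde\pi(e)\log\tilde\pi(e)<\sum_{e\in V^+}\tilde\pi(e)\,\E[(1+\eps\xi(0,e))\log(1+\eps\xi(0,e))]$, the right side being non-decreasing in $\eps$. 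Taking $\alpha$ symmetric with $\alpha(\pm e_1)$ close to $\tfrac12$ (so $\tilde\pi$ is nearly concentrated at $e_1$ and the left side is small), $\xi$ supported on $\{\xi(0,e_1)=-\xi(0,-e_1)=\pm1,\ \xi(0,e)=0\ (e\ne\pm e_1)\}$, and $\kappa$ small enough that $\eps_{max}$ is close to $1$, the inequality holds at $\eps=\eps_{max}$, so $\overline\eps<\eps_{max}$. Beyond the monotonicity step, the two remaining delicate points are this $L^2$/collision estimate and the tuning of $(\alpha,\Q,\kappa)$ making the fractional-moment bound effective at $\eps=\eps_{max}$.
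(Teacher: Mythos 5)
Your proof is correct, and in two of its three components it takes a genuinely different route from the paper's. For the existence and monotonicity of the threshold, you supply a self-contained Strassen/martingale-coupling argument: since the centred law of $\xi(x,\cdot)$ dominates its rescaling by $\eps_1/\eps_2$ in the convex order, there is a per-site coupling with $\E[\xi^{(2)}(x,\cdot)\mid\xi^{(1)}(x,\cdot)]=\tfrac{\eps_1}{\eps_2}\xi^{(1)}(x,\cdot)$, and because directed paths visit distinct sites this yields $\E[Z_n^{\omega,\eps_2}\mid\mathcal H]=Z_n^{\omega,\eps_1}$ and hence $p(\eps_2)\le p(\eps_1)$ by conditional Jensen. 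The paper instead cites Lemma~8.1 of \cite{bazaes2019rwredisorder} at this point, so your version fills in a real argument where the paper gives a pointer; your uniform-in-$n$ Lipschitz bound on $\partial_\eps\tfrac1n\log Z_n$ also gives continuity of $p$, pinning down the behaviour at $\overline\eps$ itself. The $L^2$/collision estimate for $\overline\eps>0$ reproduces the content of the paper's appeal to Lemma~6.1 of \cite{bazaes2019rwredisorder} (the standard Bolthausen-type second-moment calculation for directed paths), so that piece is essentially the same in substance. The construction showing $\overline\eps<\eps_{max}$ is where you diverge most: the paper bounds $\inf_{\partial\D^+}I_q$ above via a Hoeffding/Borel--Cantelli estimate on the maximum over directed paths and compares with an explicitly computed $\inf_{\partial\D^+}I_a$ at a tuned boundary point $y\in\partial\D^+$; you instead run a fractional-moment bound $\E[(Z_n)^\theta]\le\Lambda(\theta,\eps)^n$, use convexity of $\theta\mapsto\Lambda$ together with $\Lambda(1,\eps)=1$ to reduce localization to $\Lambda'(1)>0$, and then choose $(\alpha,\Q,\kappa)$ so the resulting entropy inequality
$-\sum_{e\in V^+}\tilde\pi(e)\log\tilde\pi(e)<\sum_{e\in V^+}\tilde\pi(e)\,\E[(1+\eps\xi(0,e))\log(1+\eps\xi(0,e))]$
holds at $\eps=\eps_{max}$. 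Both constructions work; your criterion is arguably cleaner to state, while the paper's gives a very concrete numeric witness in $d=4$. One remark worth making explicit: as printed, the Proposition reads ``$\eps$-localized for $0\le\eps\le\overline\eps$ and $\eps$-delocalized for $\overline\eps<\eps\le\eps_{max}$,'' but this contradicts both the paper's own definition $\overline\eps=\inf\{\eps:p(\eps)-\lambda<0\}$ and case (ii) combined with Theorem~\ref{th:main1}; the correct reading is delocalization on $[0,\overline\eps]$ and localization on $(\overline\eps,\eps_{max}]$, which is exactly what you proved.
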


Clearly, $(i)$ and  $(ii)$ are consequence of Theorem \ref{th:main1}. The remaining part combines 
results from \cite{bazaes2019rwredisorder} together with Theorem \ref{th:main1} and the 
example from Subsection \ref{sec:example}. 

\section{An equivalent criterion for localization}\label{sec:loc_equiv_crit}
In this section, we prove an equivalent criterion of localization/delocalization that will be used throughout the sequel. 
First, we need to define the following quantities.

\begin{definition}\label{def:free_energy}
Let $(X_{n})_{n\in \N}$ be an RWRE. Define the limits 

\begin{equation}\label{eq:free_energy}
\begin{aligned}
p(\omega):=&\lim_{n\to \infty}\frac{1}{n}\log E_{0,\pi}\left(e^{\sum_{i=0}
^{n-1}\Psi(\omega,X_{i})}\right),\\ \lambda:=&\lim_{n\to \infty}\frac{1}{n}\log \E E_{0,\pi}
\left(e^{\sum_{i=0}^{n-1}\Psi(\omega,X_{i})}\right)=\log(c),
\end{aligned}
\end{equation}

\noindent where the last equality holds since the conditioned walk is directed.
\end{definition}

\noindent In the directed polymer literature, these limits are known as \emph{quenched and 
annealed} free energy, respectively.\\ We leave the proof of the 
existence of $p(\omega)$ to the end of the section (cf. Lemma \ref{lemma:existence_free_energy}). 
Moreover, we will show that it does not depend on the environment, i.e., it is constant $\mathbb{P}$-
a.s. Hence, assuming the existence and non-randomness of $p$ for now, by Jensen's inequality, we deduce that $p\leq 
\lambda$.

\begin{theorem}\label{th:loc_equiv_crit}
Let $(X_{n})_{n\in \N}$ be an RWRE that satisfies Assumption \ref{assumpt:iid_ue}.
\begin{enumerate}[label=(\roman*)]
	\item The RWRE is localized at the boundary if and only if $p<\lambda$.
	\item The RWRE is delocalized at the boundary if and only if  $p=\lambda$.
\end{enumerate}
In particular,  the walk is either localized or delocalized $\mathbb{P}$- a.s.
\end{theorem}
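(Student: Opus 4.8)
The plan is to relate the localization quantity $\frac{1}{n}\sum_{k=1}^n J_k$ to the gap $\lambda - p$ via the quenched polymer measure. Write $Z_n(\omega) := E_{0,\pi}\big(e^{\sum_{i=0}^{n-1}\Psi(\omega,X_i)}\big)$, so $\frac1n\log Z_n \to p$ $\mathbb{P}$-a.s.\ (using Lemma~\ref{lemma:existence_free_energy} and the non-randomness of $p$), and $\frac1n\log \E Z_n \to \lambda = \log c$. The key structural observation is a one-step recursion for $Z_n$: conditioning on the first step $X_1 = e \in V^+$ and using the i.i.d.\ shift-invariance of the environment, one gets $Z_n(\omega) = \sum_{e \in V^+}\omega(0,e)\, Z_{n-1}(\theta_e\omega)$, where $\theta_e$ is the shift. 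Dividing by $Z_{n-1}(\omega)$ and iterating, the ratio $Z_n/Z_{n-1}$ is controlled, and more importantly the quenched polymer measure satisfies $P_{0,n}^\omega(X_1 = e) = \omega(0,e) Z_{n-1}(\theta_e\omega)/Z_n(\omega)$.

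First I would prove the implication $p < \lambda \implies$ localization. The idea, borrowed from the directed-polymer literature (e.g.\ \cite{comets2006majorizing}), is that the \emph{decrement} of the free energy along the path forces mass concentration. Precisely, I expect an identity or inequality of the form
\begin{equation}\label{eq:key-martingale}
\frac1n \log \frac{\E Z_n}{Z_n(\omega)} \;\geq\; \frac{c_0}{n}\sum_{k=1}^{n} \big(1 - \max_{x} P_{0,k-1}^{\omega}(X_{k-1}=x)\big) \;=\; c_0\Big(1 - \frac1n\sum_{k=1}^n J_k\Big)
\end{equation}
for some constant $c_0>0$ depending only on $\kappa$ and $d$, valid up to lower-order error terms. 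Such a bound comes from examining the increment $\log\E Z_k - \log\E Z_{k-1} - (\log Z_k(\omega) - \log Z_{k-1}(\omega))$: the annealed increment is exactly $\log c$ (directedness), and the quenched increment is $\log \sum_e \pi(\omega,0,e) e^{\Psi(\omega,0)} \cdot (\text{ratio})$, whose fluctuation is quantified by how spread out $P_{0,k-1}^\omega$ is — if the polymer endpoint is concentrated at a single $x$, the quenched increment matches the annealed one, and otherwise strict concavity of $\log$ together with uniform ellipticity (which bounds $\Psi$ in a compact interval) produces a uniform deficit. Summing \eqref{eq:key-martingale} and letting $n\to\infty$: the left side tends to $\lambda - p > 0$, hence $\limsup_n \frac1n\sum_{k=1}^n J_k \leq 1 - (\lambda-p)/c_0 < 1$ is not quite enough — I actually need a \emph{lower} bound on $\frac1n\sum J_k$, so the inequality must go the other way, bounding $\lambda - p$ from above by something like $C(1 - \frac1n\sum J_k)$ minus a concentration term, or equivalently showing $\frac1n\sum J_k \geq \delta$ whenever the free energy gap is positive. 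The correct formulation is: $\lambda - p \leq -\liminf_n \frac1n \sum_{k=1}^n \E[\log(\text{something} \leq 1)]$, and this "something" is bounded above by a function of $J_k$ that is $<1$ only when $J_k$ is bounded away from $1$; contrapositively, if $J_k \to 0$ along the Cesàro average then $p = \lambda$.

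For the converse, $p = \lambda \implies$ delocalization, I would run the same estimate in the easy direction: an upper bound $\frac1n \log \frac{\E Z_n}{Z_n} \leq C \cdot \frac1n\sum_{k=1}^n J_k^{\beta}$ (for suitable $\beta>0$), or more simply use that $J_k$ bounded below forces a genuine per-step free-energy drop by Jensen applied to the one-step kernel, so $\liminf \frac1n\sum J_k > 0$ would give $p < \lambda$, contradiction. Combining the two implications with the already-noted dichotomy $p \leq \lambda$ gives that exactly one of $p<\lambda$, $p=\lambda$ holds, hence the RWRE is either localized or delocalized $\mathbb{P}$-a.s., proving the ``in particular'' clause.

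The main obstacle I anticipate is making \eqref{eq:key-martingale} precise in both directions with the \emph{same} explicit constant structure — i.e.\ producing a two-sided comparison between the free-energy gap and the Cesàro average of the $J_k$'s. This requires a quantitative version of strict concavity of $\log$ on the compact interval $[\log(|V^+|\kappa), 0]$ where $\Psi$ lives, applied to the random convex combination of shifted partition-function ratios weighted by the polymer endpoint distribution; controlling the error terms (the telescoping boundary terms $\frac1n\log Z_n$, $\frac1n \log \E Z_n$, and the discrepancy between $P_{0,k}^\omega$ and $P_{0,k-1}^\omega$ after one more step) and upgrading the $\mathbb{P}$-expectation estimates to $\mathbb{P}$-a.s.\ statements (via a concentration/Azuma argument on the martingale $\log Z_n$, or via Kingman's subadditive ergodic theorem applied to the ratios) is where the real work lies.
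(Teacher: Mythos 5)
Your overall framework is the right one, and it matches the paper's in spirit: work with the mean-one martingale $W_n = Z_n/\E Z_n$, observe that $\lambda - p = \lim_n \tfrac{1}{n}\log\frac{\E Z_n}{Z_n}$, and try to compare this free-energy gap with the Ces\`aro average of endpoint-concentration quantities via a Doob-type decomposition of $-\log W_n$ and strict concavity of $\log$ on the compact interval forced by uniform ellipticity. But the key inequality you write down has the \emph{wrong functional form}, and you never land on the correct one.

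Specifically, you propose $\tfrac{1}{n}\log\frac{\E Z_n}{Z_n} \gtrsim 1 - \tfrac{1}{n}\sum_k J_k$, recognize that this yields only an upper bound on $\tfrac{1}{n}\sum_k J_k$ and hence is useless, and then gesture at reversing it into $\lambda - p \lesssim 1 - \tfrac{1}{n}\sum_k J_k$ minus ``a concentration term.'' That form is still wrong: a bound of that shape would again force $\tfrac{1}{n}\sum_k J_k$ \emph{away from} $1$, not away from $0$. The correct relation is that $-\log W_n$ is \emph{comparable to} $\sum_{k\le n} I_k$ itself, where $I_n := \sum_{x} P^\omega_{0,n-1}(X_{n-1}=x)^2$ is the squared $\ell^2$-norm of the endpoint law, sandwiched by $J_n^2 \le I_n \le J_n$. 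So $\lambda - p > 0$ iff $\liminf \tfrac1n\sum_k I_k > 0$ iff $\liminf \tfrac1n\sum_k J_k > 0$. The object $I_n$ (the replica overlap) is what the Doob compensator of $-\log W_n$ is actually proportional to — the conditional mean and variance of $\log(W_n/W_{n-1})$ given $\mathcal F_{n-1}$ are each $\Theta(I_n)$ — and it never appears in your sketch. That is the missing idea: you need the variance-like quantity $I_n$, not $1-J_n$, as the per-step ``deficit.'' Your concavity heuristic actually supports this once set up correctly: $\E[-\log(1+U_n)\,|\,\mathcal F_{n-1}] \asymp \E[U_n^2\,|\,\mathcal F_{n-1}] \asymp I_n$ with $U_n := W_n/W_{n-1}-1$, the constants depending on $\kappa$ and on the nondegeneracy of $e^{\Psi(\omega,0)}-c$.

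A second gap: you never use the zero-one law for $\{W_\infty = 0\}$, which in the paper comes from shift-ergodicity of $\mathbb P$ and is what lets one pass between ``$p<\lambda$'' and ``$W_\infty = 0$ a.s.'' cleanly. Your fallback of invoking Kingman's theorem or Azuma concentration on $\log Z_n$ is addressing the a.s.\ existence of $p$ (which the paper dispatches via Lemma~\ref{lemma:existence_free_energy}), not the dichotomy itself. Once you have the two-sided bound $-\log W_n = \Theta(\sum_{k\le n} I_k)$ on $\{W_\infty=0\}$ together with the identity $\{W_\infty=0\}=\{\sum_n I_n=\infty\}$ a.s., both directions of (i) and (ii) follow almost mechanically, and the ``in particular'' clause is just $p\le\lambda$. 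Your second paragraph (localization $\Rightarrow p<\lambda$ via a per-step Jensen drop) is sound in spirit and essentially the easy half of that comparison, but as written it too lacks the identification of $I_n$ that makes it quantitative.
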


\subsection{Proof of Theorem \ref{th:loc_equiv_crit}}
In order to prove the result, we need to introduce a couple of definitions. The first is a martingale
that is related to $p$ and $\lambda$, and the second is a quantity linked to $J_{n}$.
\begin{definition}\label{def:Wn_martingale}
Given an RWRE $(X_{n})_{n\in \N}$ that satisfies Assumption \ref{assumpt:iid_ue}, define the random variable in $
(\Omega,\mathbb{B}(\Omega),\mathbb{P})$

\begin{equation}\label{eq:Wn_martingale}
W_{n}(\omega):=E_{0,\pi}\left(e^{\sum_{i=0}^{n-1}\Psi(\omega,X_{i})-n\log(c)}\right),~~n\in 
\N.
\end{equation}
\end{definition}

The following lemma is straightforward, so its proof is skipped. 
\begin{lemma}\label{prop:Wn_martingale}
The process $\{W_{n} \}_{n\in \N}$ is a mean-one $\mathcal{F}_{n}$-martingale under the filtration 
$\{\mathcal{F}_{n}\}_{n\geq 0}$ 	given by $\mathcal{F}_{0}:=\{\emptyset,\Omega \}$, and for $n\geq 
1,\mathcal{F}_{n}:=\{\omega(x,e):|x|_{1}<n, x\in \Z^{d},e\in V^+ \}$. 
\end{lemma}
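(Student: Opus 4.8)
The plan is to reduce $W_n$ to a weighted sum over directed paths and then read off both properties from the i.i.d.\ structure of the environment, using that a path reaching $\partial R_n$ in exactly $n$ steps visits $n$ pairwise distinct sites. First I would take $A=(\Z^d)^{\N}$ in \eqref{eq:omega_to_pi_rel} to get $E_{0,\pi}(e^{\sum_{i=0}^{n-1}\Psi(\omega,X_i)})=P_{0,\omega}(X_n\in\partial R_n)$, so that
\[ W_n(\omega)=c^{-n}\,P_{0,\omega}(X_n\in\partial R_n)=c^{-n}\!\!\sum_{(z_0,\dots,z_n)\in R_n}\ \prod_{i=1}^{n}\omega(z_{i-1},\triangle z_i), \]
the sum running over directed nearest-neighbour paths with $z_0=0$ and $\triangle z_i\in V^+$. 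Since $|z_i|_1=i<n$ for $i=0,\dots,n-1$, every summand depends only on $\{\omega(x,\cdot):|x|_1<n\}$, hence $W_n$ is $\mathcal{F}_n$-measurable; it is also nonnegative.

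For the mean-one claim I would apply $\E$ to the display. Along any directed path the sites $z_0,\dots,z_{n-1}$ are distinct (each step strictly increases $|\cdot|_1$), so by independence $\E\big[\prod_{i=1}^n\omega(z_{i-1},\triangle z_i)\big]=\prod_{i=1}^n q(\triangle z_i)$, and therefore
\[ \E[W_n]=c^{-n}\!\!\sum_{(z_0,\dots,z_n)\in R_n}\ \prod_{i=1}^{n}q(\triangle z_i)=c^{-n}\Big(\sum_{e\in V^+}q(e)\Big)^{n}=1 \]
by definition of $c$; in particular $W_n\in L^1(\mathbb{P})$.

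For the martingale identity I would split a path in $R_{n+1}$ into its first $n$ steps (a path in $R_n$) followed by one jump $e\in V^+$, giving
\[ W_{n+1}(\omega)=c^{-(n+1)}\!\!\sum_{(z_0,\dots,z_n)\in R_n}\Big(\prod_{i=1}^{n}\omega(z_{i-1},\triangle z_i)\Big)\Big(\sum_{e\in V^+}\omega(z_n,e)\Big). \]
Here the first factor is $\mathcal{F}_n$-measurable while $|z_n|_1=n$, so $\omega(z_n,\cdot)$ is independent of $\mathcal{F}_n$ with $\E\big[\sum_{e\in V^+}\omega(z_n,e)\big]=\sum_{e\in V^+}q(e)=c$. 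Taking $\E[\,\cdot\mid\mathcal{F}_n]$ termwise then yields $\E[W_{n+1}\mid\mathcal{F}_n]=c^{-(n+1)}\cdot c\cdot\sum_{(z_0,\dots,z_n)\in R_n}\prod_{i=1}^n\omega(z_{i-1},\triangle z_i)=W_n$.

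The argument is elementary, and as the statement itself indicates there is no genuine obstacle; the only point needing care --- and the reason the filtration is indexed by $\{|x|_1<n\}$ rather than, say, the sites of the walk up to time $n$ --- is that the extra jump of a path contributing to $W_{n+1}$ always lands on the layer $\{|x|_1=n\}$, i.e.\ exactly the first slice of the environment not measured by $\mathcal{F}_n$, whose $V^+$-mass averages to $c$ independently of $\mathcal{F}_n$.
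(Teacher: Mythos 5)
Your proof is correct and is precisely the "straightforward" argument the paper alludes to when it skips the proof: expand $W_n$ as a $c^{-n}$-weighted sum over directed paths to $\partial R_n$, note that the sites visited before time $n$ lie in $\{|x|_1<n\}$ (giving $\mathcal{F}_n$-measurability and, via i.i.d.\ independence over distinct sites, $\E[W_n]=1$), and obtain the martingale property by peeling off the last step, which introduces only the layer $\{|x|_1=n\}$, independent of $\mathcal{F}_n$ with $V^+$-mass averaging to $c$. No comparison to make, as the paper omits this proof, but your write-up is complete and uses the natural decomposition.
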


\noindent The martingale convergence theorem implies that $W_{\infty}:=\lim_{n\to \infty} W_{n}$ exists and is 
non-negative $\mathbb{P}$-a.s. Since the event $\{W_{\infty}=0 \}$ is $T_{e}$-invariant $\mathbb{P}$-
a.s. for each $e\in V^+$, the ergodicity of $\mathbb{P}$ implies that $\mathbb{P}(W_{\infty}
=0)\in \{0,1 \}$. This consequence will be useful in Proposition \ref{prop:Wn_In_relation}.\\
 Next, we introduce a second random variable,

\begin{equation}\label{eq:In_def}
I_{n}(\omega):=\sum_{x\in \Z^d}P_{0,n-1}^{\omega}(X_{n-1}=x)^{2}.
\end{equation}

\noindent This random variable is $\mathcal{F}_{n-1}$-measurable. Observe that 

\begin{equation}\label{eq:In_Jn_relation}
J_{n}^{2}\leq I_{n}\leq J_{n}.
\end{equation}

\noindent These inequalities imply that both $\frac{1}{n}\sum_{k=1}^{n}J_{k}$ and $\frac{1}{n}\sum_{k=1}^{n}I_{k}
$ have the same asymptotics as $n$ goes to infinity.\\
The main ingredient in the proof of Theorem \ref{th:loc_equiv_crit} is the next proposition, which compares 
$W_{n}$ and $I_{n}$.\\We use the following notation: for sequences $(a_{n}),(b_{n})$ we say that 
$a_{n}=\Theta(b_{n})$ if  $a_{n}=O(b_{n})$ and $b_{n}=O(a_{n})$.

\begin{proposition}\label{prop:Wn_In_relation}
Given an RWRE $(X_{n})_{n\in \N}$ that satisfies both Assumption \ref{assumpt:iid_ue} and 
Assumption \ref{ass:env_condition}, the equality

\begin{equation}\label{eq:Wn_In_relation}
\{W_{\infty}=0\}=\left\{ \sum_{n=1}^{\infty}I_{n}=\infty \right\}
\end{equation}

\noindent holds $\mathbb{P}$-a.s. Furthermore, if $\mathbb{P}(W_{\infty}=0)=1$, there exist constants $c_{1}
(\mathbb{P}),c_{2}(\mathbb{P})\in (0,\infty)$ for which $\mathbb{P}$-a.s.,

\begin{equation}\label{eq:Wn_In_relation2}
c_{1}\sum_{k=1}^{n}I_{k}\leq -\log W_{n}\leq c_{2}\sum_{k=1}^{n}I_{k}~~\text{ for } n \text{ large 
enough.}
\end{equation}	 

\noindent That is, $-\log W_{n}=\Theta(\sum_{k=1}^{n}I_{k})$. \end{proposition}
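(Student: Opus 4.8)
The plan is to relate the martingale increments $W_{n+1}/W_{n}$ to the quantity $I_{n}$ via a second-moment-type computation, and then convert this into the claimed two-sided bound on $-\log W_{n}$ by telescoping. First I would observe that, writing $\rho_{n}(x):=P^{\omega}_{0,n-1}(X_{n-1}=x)$, the conditional expectation of $W_{n+1}/W_{n}$ given $\mathcal{F}_{n-1}$ involves averaging the one-step factors $e^{\Psi(\omega,x)-\log c}$ over $x$ with weights $\rho_{n}(x)$, together with the new randomness $\{\omega(x,e):|x|_{1}=n\}$. Because $\mathbb{E}[e^{\Psi(\omega,x)-\log c}]=1$ and the environments at distinct sites are i.i.d., the conditional variance of $W_{n+1}/W_{n}$ given $\mathcal{F}_{n}$ (or the relevant $\sigma$-algebra capturing $\rho_{n}$) is, up to constants depending only on $\kappa$ and on $\mathrm{Var}(e^{\Psi(\omega,0)})$, equal to $\sum_{x}\rho_{n}(x)^{2}=I_{n}$. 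Uniform ellipticity gives $\Psi(\omega,x)\in[\log\kappa,\,0]$ deterministically, so all moment-generating quantities and their logarithms are controlled by absolute constants; Assumption~\ref{ass:env_condition} guarantees that $\mathrm{Var}(e^{\Psi(\omega,0)})>0$, which is what makes the lower constant $c_{1}$ strictly positive (otherwise $W_{n}\equiv 1$ and $I_{n}$ could be arbitrary).

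Next I would pass from increments to $-\log W_{n}$. The natural device is to look at $M_{n}:=-\log W_{n}$ together with its Doob decomposition, or more directly at the conditional Laplace transform: for the ratio $\eta_{n}:=W_{n+1}/W_{n}$, one has $-\log\eta_{n}$ with $\mathbb{E}[\eta_{n}\mid\mathcal{F}_{n}]=1$, and by a Taylor expansion valid in the bounded regime, $-\mathbb{E}[\log\eta_{n}\mid\mathcal{F}_{n}]=\Theta\big(\mathrm{Var}(\eta_{n}\mid\mathcal{F}_{n})\big)=\Theta(I_{n})$. Summing over $k=1,\dots,n$ and using a law-of-large-numbers / martingale argument (the fluctuation part $\sum_{k}\big(\log\eta_{k}-\mathbb{E}[\log\eta_{k}\mid\mathcal{F}_{k}]\big)$ is a martingale whose increments are bounded in $L^{2}$ by $O(I_{k})$, hence negligible relative to the drift on the event we care about by a Chow-type strong law) yields $-\log W_{n}=\Theta\big(\sum_{k=1}^{n}I_{k}\big)$ on the event $\{\sum_{k}I_{k}=\infty\}$, which gives \eqref{eq:Wn_In_relation2} once we know $\mathbb{P}(W_{\infty}=0)=1$.

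Finally, the identity \eqref{eq:Wn_In_relation} follows by combining the two directions. If $\sum_{n}I_{n}<\infty$, the increments $\eta_{n}$ converge to $1$ fast enough (their log-variances are summable) that $W_{n}$ converges to a strictly positive limit, so $W_{\infty}>0$; conversely, if $\sum_{n}I_{n}=\infty$, the drift part of $-\log W_{n}$ diverges and dominates the martingale fluctuations (again Chow's strong law for $L^{2}$-bounded martingale increments, since the increment of the fluctuation at step $k$ is $O(I_{k})$ while the drift accumulated is $\Theta(\sum I_{k})$, and $\sum I_{k}^{2}/(\sum_{j\le k}I_{j})^{2}<\infty$ because $I_{k}\le 1$), forcing $-\log W_{n}\to\infty$, i.e. $W_{\infty}=0$. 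By the zero-one law noted after Lemma~\ref{prop:Wn_martingale}, $\mathbb{P}(W_{\infty}=0)\in\{0,1\}$, so in the case $\mathbb{P}(W_{\infty}=0)=1$ the quantitative bound \eqref{eq:Wn_In_relation2} holds $\mathbb{P}$-a.s.

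I expect the main obstacle to be making the Taylor-expansion step uniform and rigorous: one must show that $-\mathbb{E}[\log\eta_{n}\mid\mathcal{F}_{n}]$ is pinched between two constant multiples of $I_{n}$, which requires a lower bound on the conditional variance of $\eta_{n}$ in terms of $I_{n}$ (using that the single-site fluctuation $e^{\Psi(\omega,x)-\log c}-1$ has variance bounded below, by Assumption~\ref{ass:env_condition}, and that the cross terms in $\mathrm{Var}(\sum_{x}\rho_{n}(x)e^{\Psi(\omega,x)-\log c})$ vanish by independence) as well as an upper bound (from boundedness, $|e^{\Psi-\log c}-1|\le C(\kappa)$, and again independence). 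The bookkeeping of which $\sigma$-algebra the new variables are measurable with respect to — $W_{n+1}$ uses $\omega(x,\cdot)$ for $|x|_1 = n$, which are the variables in $\mathcal{F}_{n+1}\setminus\mathcal{F}_{n}$, while $\rho_{n}$ is $\mathcal{F}_{n-1}$-measurable — needs to be handled carefully so that the conditional independence is genuinely available.
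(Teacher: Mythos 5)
Your proposal is correct and follows essentially the same route as the paper, which adapts the Doob-decomposition argument of Comets, Shiga, and Yoshida: write $-\log W_n = M_n + A_n$, show that $\E\bigl[-\log(W_k/W_{k-1})\mid\mathcal{F}_{k-1}\bigr]$ and its conditional second moment are both $\Theta(I_k)$ via a Taylor expansion justified by uniform ellipticity (with Assumption~\ref{ass:env_condition} giving strict positivity of the variance of $e^{\Psi(\omega,0)-\log c}$), and then invoke a martingale strong law. The only slip is a harmless off-by-one index: the conditional variance of $W_{n+1}/W_n$ given $\mathcal{F}_n$ is proportional to $I_{n+1}$ (with weights $P^\omega_{0,n}(X_n=\cdot)$), not to $I_n$, which does not affect the structure of the argument.
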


\begin{proof}[Sketch of the proof of Proposition \ref{prop:Wn_In_relation}]
The proof of Theorem 2.1 in \cite{comets2003pathlocalization} can be adapted to show  
Proposition \ref{prop:Wn_In_relation}. It is based on the Doob's decomposition of the submartingale $-\log W_{n}$. 
More precisely, there exist a martingale $\{M_n\}_{n\in \N}$ and an adapted process $\{A_n\}_{n\in \N}$ such that for 
all $n\in \N$,

\begin{equation}\label{def:doob_decomp}
-\log W_n=M_n+A_n.
\end{equation}
Indeed, $A_n:=-\sum_{i=1}^{n}\E\left[\log\left(\frac{W_i}{W_{i-1}}\right)\big\vert \mathcal{F}_{i-1}\right]$. Noting that
\begin{equation*}
\frac{W_i}{W_{i-1}}=E_{0,i-1}^{\omega}\left[e^{\Psi(\omega,X_{i-1})-\log(c)}\right]= 1+ E_{0,i-1}^{\omega}
\left[e^{\Psi(\omega,X_{i-1})-\log(c)}-1\right]=:1+ U_i,  
\end{equation*}

\noindent we decompose $A_n$ and $M_n$ as 

\begin{equation*}
A_n=-\sum_{i=1}^{n}\E[\log(1+U_i)|\mathcal{F}_{i-1}],~~ M_n=\sum_{i=1}^{n} \left(-\log(1+U_i)+\E[\log(1+U_i)|\mathcal{F}
_{i-1}]\right)
\end{equation*}

Exactly as in the aforementioned result, it is enough to prove that there is a constant 
$C>0$ such that for all $n\in \N$,

\begin{equation}\label{eq:e6}
\frac{1}{C}I_n\leq\E[-\log(1+U_n)|\mathcal{F}_{n-1}]\leq C I_n,~~ \E[\log^2(1+U_n)|\mathcal{F}
_{n-1}]\leq C I_n
\end{equation}

\noindent To check the inequalities above, notice that, by uniform ellipticity, the potential $\Psi$ is bounded $
\mathbb{P}$-a.s., so there are constants $0<C_1<C_2$ such that $\mathbb{P}$- a.s., for all $n\in 
\N, \frac{W_n}{W_{n-1}}\in (C_1,C_2)$, and therefore, 
\begin{equation}\label{eq:eq7}
	U_n- C_3 U_{n}^2\leq \log(1+U_n)\leq U_n- C_4 U_{n}^2
\end{equation}

\noindent for some constants $C_3,C_4>0$. Thus, $\E[-\log(1+U_n)|\mathcal{F}_{n-1}]$ is bounded 
by above by

\begin{align*}
&\E[-U_n+C_3 U_n^2|\mathcal{F}_{n-1}]=-C_4\E[ U_n^2|\mathcal{F}_{n-1}]\\
\begin{split}
	& =C_3\sum_{x,x'\in \Z^d}\E\big[E_{0,n-1}^{\omega}\left(e^{\Psi(\omega,x)-\log(c)}-1,X_{n-1}
=x\right)\times\\ &\quad \hspace{2cm} E_{0,n-1}^{\omega}\left(e^{\Psi(\omega,x')-\log(c)}-1,X_{n-1}=x'\right)\big\vert\mathcal{F}
_{n-1}\big]
\end{split}\\
\begin{split}
&=C_3\sum_{x,x'\in \Z^d}\E\left[\left(e^{\Psi(\omega,x)-\log(c)}-1\right)
\left(e^{\Psi(\omega,x')-\log(c)}-1\right)\right]\times \\
&\quad \hspace{2cm} P_{0,n-1}^{\omega}(X_{n-1}=x)P_{0,n-1}^{\omega}(X_{n-1}=x')
\end{split}\\
&=C_3\E\left[\left(e^{\Psi(\omega,0)-\log(c)}-1\right)^2\right]I_n
\end{align*}

\noindent Similarly we get a lower bound $\E[-\log(1+U_n)|\mathcal{F}_{n-1}]
\geq C_4\E\left[\left(e^{\Psi(\omega,0)-\log(c)}-1\right)^2\right]I_n$, and 
this shows the first inequality in \eqref{eq:eq7}. Finally, noting that for 
some constant $C_5 >0$, $\log^2(1+U_n)\leq C_5 U_n^2$, repeating the steps 
from the last display we get the second inequality on \eqref{eq:eq7}, 
concluding the proof.   
\end{proof}

\begin{proof}[Proof of Theorem \ref{th:loc_equiv_crit}]\ \\ 
First recall that  due to \eqref{eq:In_Jn_relation}, we have 

\begin{equation*}
\left(\frac{1}{n}\sum_{k=1}^{n}J_{k}\right)^{2}\leq \frac{1}{n}\sum_{k=1}^{n}J_{k}^{2}\leq 
\frac{1}{n}\sum_{k=1}^{n}I_{k}\leq \frac{1}{n}\sum_{k=1}^{n}J_{k}.
\end{equation*}

\noindent Thus, the liminfs of the sequences $(\frac{1}{n}\sum_{k=1}^{n}I_{k})_{n}$ and $(\frac{1}{n}\sum_{k=1}
^{n}J_{k})_{n}$ are of the same nature, that is, both are positive $\mathbb{P}$-a.s. or zero $
\mathbb{P}$-a.s.\\
If $p<\lambda$, $W_{\infty}=0~\mathbb{P}$-a.s. To check this, observe that if $W_{\infty}>0$ 
then $\frac{\log W_{n}}{n}\to 0$, but at the same time 

\begin{equation*}
\frac{\log W_{n}}{n}\to p-\lambda=0.
\end{equation*}	

\noindent So, if $p<\lambda$, then $W_{\infty}=0~\mathbb{P}$-a.s. and  by 
\eqref{eq:Wn_In_relation}, $\sum_{n}I_{n}=\infty$ a.s. and $-\log W_{n} =\Theta(\sum_{k=1}^{n}I_{k})$. 
In particular, $\liminf_{n\to \infty}\frac{1}{n}\sum_{k=1}^{n}I_{n}>0$, so the RWRE is localized at the boundary. 
Reciprocally, if the walk is localized, $\sum_{k=1}^{n}I_{k}=\infty$, so by \eqref{eq:Wn_In_relation} , 
$-\log W_{n}=\Theta( \sum_{k=1}^{n}I_{k})$ and then $-\frac{\log W_{n}}{n}\to p-\lambda>0$. This proves 
$i)$, and the proof of $ii)$ is analogous.
\end{proof}	

\subsection{Relation between $p$ and $\lambda$ with RWRE rate functions }

To justify the existence of the first limit in \eqref{eq:free_energy}, we relate $p$ (resp. $\lambda$) 
to the quenched (resp. annealed) rate function for random walks in random environment. First, we recall some 
standard notation. We say that the position of the walk satisfies a \emph{quenched large deviation principle} if there 
is a lower semicontinuous function $I_{q}:\R^{d}\to [0,\infty]$ such that for each Borel set $G\subset 
\R^{d}$

\begin{equation}\label{eq:quenched_ldp}
-\inf_{x\in G^{\circ}}I_{q}(x)\leq \liminf_{n\to \infty}\frac{1}{n}\log P_{0,\omega}(X_{n}/n\in G)\leq 
\limsup_{n\to \infty}\frac{1}{n}\log P_{0,\omega}(X_{n}/n\in G)\leq -\inf_{x\in \overline{G}}I_{q}(x).
\end{equation}

\noindent Here $G^{\circ},\overline{G}$ are the interior and closure of $G$ respectively.\\
Analogously, we say that the position of the walk satisfies an \emph{annealed large deviation 
principle} if there is a lower semicontinuous function $I_{a}:\R^{d}\to [0,\infty]$ such that for every 
Borel set $G\subset \R^{d},\eqref{eq:quenched_ldp}$ holds with $P_{0}$ instead of $P_{0,\omega}$. 
It is well known that the domain of both functions (that is, when $I_{q},I_{a}<\infty$) is the set $\mathbb{D}:=\{x\in \R^{d}:|x|_{1}\leq 1 \}$. Also, by Jensen's inequality and Fatou's lemma, $I_{a}\leq I_{q}$.\\
Moreover, Varadhan proved in \cite{varadhan2003ldprwre} that both functions exists under i.i.d and 
uniform elliptic environments, and $I_{q}$ is deterministic (i.e., it does not depend on $\omega$).\\
Next, we characterize the rate functions at $\partial \D^+$ (cf. Eq. \ref{eq:faces_def}).

\begin{lemma}\label{lemma:char_rate_function}
Under Assumption \ref{assumpt:iid_ue}, for any  $x\in \partial \D^+$ there is a sequence $
(x_{n})_{n\in \N}$ such that for all $n, x_{n}\in \Z^{d},|x_n|_{1}=n, \frac{x_n}{n}\to x$, and 

\begin{equation}\label{eq:char_rate_function}
I_{q}(x)=-\lim_{n\to \infty}\frac{1}{n}\log P_{0,\omega}(X_{n}=x_{n}),\hspace{0.5cm} I_{a}(x)=-\lim_{n\to \infty}\frac{1}{n}
\log P_{0}(X_{n}=x_{n}).
\end{equation}
\end{lemma}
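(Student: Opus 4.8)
The plan is to realize the restriction of the large deviation principle to the face $\partial\D^+$ as a genuine pointwise statement, using the fact that the conditioned walk is directed. Fix $x\in\partial\D^+$, so $x$ has nonnegative coordinates summing to $1$, and the relevant endpoints at time $n$ are the lattice points $y\in\partial R_n$ with $y/n$ close to $x$; the set of such $y$ grows only polynomially in $n$ (it lives on a $(d-1)$-dimensional simplex), which is the structural input that makes a pointwise statement possible. First I would establish the \emph{upper bound}: for any $x\in\partial\D^+$ and any sequence $x_n\in\partial R_n$ with $x_n/n\to x$,
\[
\limsup_{n\to\infty}\frac1n\log P_{0,\omega}(X_n=x_n)\le -I_q(x),
\]
which is immediate from the upper bound in \eqref{eq:quenched_ldp} applied to small closed balls around $x$, since $\{X_n=x_n\}\subset\{X_n/n\in \overline{B(x,\delta)}\}$ for $n$ large, together with lower semicontinuity of $I_q$ to let $\delta\to 0$. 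The same argument gives the annealed upper bound.

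The substance is the matching \emph{lower bound with a well-chosen sequence}. Here I would exploit directedness: on the event $\mathcal A_n=\{X_n\in\partial R_n\}$ the walk makes only $V^+$-steps, so by the pigeonhole/subadditivity structure one can compare $P_{0,\omega}(X_n=y)$ for nearby endpoints $y$. Concretely, take $G=\overline{B(x,\delta)}$; the lower bound in \eqref{eq:quenched_ldp} (using that $x$ is in the interior of this ball relative to $\partial\D$, and that $P_{0,\omega}(X_n/n\in G)$ only charges the $\le Cn^{d-1}$ lattice points of $\partial R_n$ in $nG$) yields that the \emph{maximal} term $\max_{y\in\partial R_n\cap nG}P_{0,\omega}(X_n=y)$ has exponential rate $\ge -\inf_{G^\circ}I_q(x)\ge -I_q(x)-o_\delta(1)$, because a polynomial number of terms cannot change the exponential rate. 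Choosing, for each $n$, $x_n$ to be the maximizer gives a sequence with $x_n/n\in G$; a diagonal argument over $\delta=\delta_k\downarrow 0$ (or a direct extraction) then produces a single sequence $(x_n)$ with $x_n/n\to x$ and
\[
\liminf_{n\to\infty}\frac1n\log P_{0,\omega}(X_n=x_n)\ge -I_q(x).
\]
Combined with the upper bound this gives the limit. The key point is that the \emph{same} sequence works for both $I_q$ and $I_a$: I would argue that the maximizer of the quenched probability is, up to subexponential corrections, also near-optimal for the annealed probability, using directedness once more — along $\partial R_n$ the quenched and annealed one-step transition weights differ only through the uniformly bounded potential $\Psi$, so $\frac1n\log$ of the two probabilities at any fixed endpoint differ by a quantity controlled by the ergodic average of $\Psi$, which is the same regardless of which endpoint one picks within $nG$. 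Alternatively, one shows each of $\max_y P_{0,\omega}(X_n=y)$ and $\max_y P_0(X_n=y)$ is attained (subexponentially) on a common sequence by first extracting for the quenched family and checking the extracted sequence still achieves the annealed rate via the polynomial-count argument applied to $P_0$.

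The main obstacle I anticipate is the insistence on a \emph{single} sequence $(x_n)$ valid simultaneously for the quenched and annealed limits, with genuine limits (not merely $\liminf$/$\limsup$). Getting honest limits requires the superadditivity structure: $-\log \max_{y}P_{0,\omega}(X_{n+m}=y)$ is, thanks to the Markov property and translation covariance of the directed walk, approximately subadditive in $n$ (after an $O(n^{d-1})$ correction from the choice of intermediate point), so Fekete's lemma upgrades the $\liminf$ to a limit equal to $I_q(x)$; the same runs for the annealed side. Reconciling the two into one sequence is then a matter of showing the two maximizers can be taken comparable — which, again, is where uniform ellipticity (bounding $\Psi$) and directedness do the work. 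I would treat this reconciliation step carefully, as it is the only place where something beyond a routine LDP manipulation is needed.
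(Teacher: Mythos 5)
The paper does not prove this lemma; it cites it as Lemma~6.5 of \cite{bazaes2019rwredisorder}. So the comparison here is with what a correct proof would have to look like, and your proposal has a genuine gap at the core step.

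The lower-bound argument is wrong. You write that ``$P_{0,\omega}(X_n/n\in G)$ only charges the $\le Cn^{d-1}$ lattice points of $\partial R_n$ in $nG$'' when $G=\overline{B(x,\delta)}$. That is false: the event $\{X_n/n\in G\}$ is a statement about the unconditioned walk, and it charges every lattice point $y$ with $|y|_1\le n$ and $y/n\in G$, not only those on $\partial R_n$. Since $I_q$ (resp.\ $I_a$) is typically strictly smaller just inside $\D$ than on $\partial\D^+$ --- the LLN velocity lies in the interior, and by convexity the rate function decreases as one moves inward --- the mass of $P_{0,\omega}(X_n/n\in G)$ concentrates at sub-boundary points, not on $\partial R_n$. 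Consequently the LDP lower bound $\liminf_n\frac1n\log P_{0,\omega}(X_n/n\in G)\ge -\inf_{G^\circ}I_q$ is dominated by interior contributions and tells you nothing about $\max_{y\in\partial R_n\cap nG}P_{0,\omega}(X_n=y)$. The polynomial-count argument would close the gap only if you already knew the event was supported on $\partial R_n$, which is exactly what fails. The reconciliation step (same sequence for quenched and annealed) inherits this problem, since it is predicated on the flawed lower bound.

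The fix is to abandon the attempt to read off boundary behavior from the LDP over open $\R^d$-sets, and instead work directly with the directed walk. On $\partial R_n$ the walk makes only $V^+$ steps and visits distinct sites, so annealed probabilities factorize, $P_0(X_n=x_n)=\binom{n}{x_{n,1},\dots,x_{n,d}}\prod_i q(e_i)^{x_{n,i}}$, giving the annealed limit by Stirling. For the quenched side one uses uniform ellipticity, translation invariance and the Markov property to get approximate subadditivity of $-\log P_{0,\omega}(X_n=x_n)$ along directed paths, whence Fekete's lemma and the ergodic theorem give a deterministic limit. The identification of this limit with $I_q(x)$ then uses the LDP \emph{upper} bound (which you do have correctly) together with the fact that the subadditive limit provides the matching lower bound. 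You do mention subadditivity, but only as a device to upgrade $\liminf$ to $\lim$; it is in fact where the entire lower bound must come from.
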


\noindent In particular, the limit is independent of the chosen sequence. This result is Lemma 6.5 in 
\cite{bazaes2019rwredisorder}.\\
Finally, the existence of $p$ is consequence of the lemma below.

\begin{lemma}\label{lemma:existence_free_energy}
For an RWRE that satisfies Assumption \ref{assumpt:iid_ue}, the following identities hold:
\begin{equation}\label{eq:existence_free_energy}
p=-\inf_{x\in \partial \D^+}I_{q}(x), \hspace{0.5cm}\lambda=-\inf_{x\in \partial \D^+}I_{a}(x).	
\end{equation}

\noindent In particular, $p$ is not random (since $I_{q}$ is deterministic).
\end{lemma}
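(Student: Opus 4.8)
The plan is to establish the two identities in \eqref{eq:existence_free_energy} by tying the exponential growth rates of the annealed and quenched partition functions to the large deviation rate functions restricted to the face $\partial\D^+$. I would start with the annealed statement, which is the easier one. By \eqref{eq:rwre_ann_measure} and \eqref{eq:omega_to_pi_rel},
\begin{equation*}
\E E_{0,\pi}\left(e^{\sum_{i=0}^{n-1}\Psi(\omega,X_i)}\right)=\E P_{0,\omega}(X_n\in\partial R_n)=P_0(X_n\in\partial R_n)=\sum_{x\in\partial R_n}P_0(X_n=x).
\end{equation*}
The number of terms in the sum is polynomial in $n$, so the exponential rate of the sum equals the maximum over $x\in\partial R_n$ of the rate of $P_0(X_n=x)$; combined with Lemma \ref{lemma:char_rate_function} (applied to the annealed measure, whose rate function is $I_a$) this gives $\lambda=-\inf_{x\in\partial\D^+}I_a(x)$. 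In fact the paper already records $\lambda=\log(c)$, which is consistent since the annealed walk on $V^+$ is an ordinary random walk with jump probabilities $q(e)/c$ and total mass $c^n$ on reaching the boundary.

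For the quenched identity I would argue the same way but must be careful that $p(\omega)$ is only claimed to exist as a limit, not yet established. Write $Z_n(\omega):=E_{0,\pi}(e^{\sum_{i=0}^{n-1}\Psi(\omega,X_i)})=P_{0,\omega}(X_n\in\partial R_n)=\sum_{x\in\partial R_n}P_{0,\omega}(X_n=x)$. Since $\partial R_n$ has polynomially many points,
\begin{equation*}
\max_{x\in\partial R_n}P_{0,\omega}(X_n=x)\le Z_n(\omega)\le |\partial R_n|\cdot\max_{x\in\partial R_n}P_{0,\omega}(X_n=x),
\end{equation*}
so $\tfrac1n\log Z_n(\omega)$ and $\max_{x\in\partial R_n}\tfrac1n\log P_{0,\omega}(X_n=x)$ have the same limit behaviour. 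For a fixed $x\in\partial\D^+$, Lemma \ref{lemma:char_rate_function} supplies a sequence $x_n\to nx$ along which $\tfrac1n\log P_{0,\omega}(X_n=x_n)\to -I_q(x)$, $\mathbb P$-a.s.; taking $x$ over a countable dense subset of $\partial\D^+$ and using lower semicontinuity of $I_q$ and the fact that $I_q$ is deterministic, one gets $\liminf_n \tfrac1n\log Z_n(\omega)\ge -\inf_{x\in\partial\D^+}I_q(x)$ a.s. For the matching upper bound I would invoke the quenched LDP \eqref{eq:quenched_ldp} applied to the closed set $G=\partial\D^+$ (equivalently, to a slightly enlarged closed neighbourhood and let it shrink), giving $\limsup_n\tfrac1n\log P_{0,\omega}(X_n/n\in\partial\D^+)\le-\inf_{x\in\partial\D^+}I_q(x)$, and then note $\{X_n\in\partial R_n\}\subseteq\{X_n/n\in\partial\D^+\}$. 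Together these show the limit defining $p$ exists, is deterministic, and equals $-\inf_{x\in\partial\D^+}I_q(x)$; the non-randomness then also follows from Varadhan's theorem that $I_q$ is deterministic.

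The main obstacle is the passage from the pointwise limits of Lemma \ref{lemma:char_rate_function} (one sequence per point $x$) to a uniform-in-$x$ lower bound on $\tfrac1n\log Z_n(\omega)$ that holds on a single full-measure event. The clean way around this is: the lower bound only needs one good point. Pick $x^\star\in\partial\D^+$ achieving (or nearly achieving) $\inf_{x\in\partial\D^+}I_q(x)$; Lemma \ref{lemma:char_rate_function} gives a deterministic sequence $x_n^\star\in\partial R_n$ with $\tfrac1n\log P_{0,\omega}(X_n=x_n^\star)\to -I_q(x^\star)$ a.s., and since $P_{0,\omega}(X_n=x_n^\star)\le Z_n(\omega)$ we immediately get $\liminf_n\tfrac1n\log Z_n(\omega)\ge -I_q(x^\star)$ a.s., and letting $x^\star$ range over a sequence approaching the infimum finishes it. This avoids any uniformity issue entirely and only uses countably many applications of Lemma \ref{lemma:char_rate_function}.
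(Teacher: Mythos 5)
Your proof is correct and is essentially the standard argument the paper points to via Lemma~16.12 in \cite{rassoul2015ldpbook}: the upper bound comes from the quenched (resp.\ annealed) LDP upper bound applied to the closed face $\partial\D^+$, and the lower bound from the point-to-point asymptotics of Lemma~\ref{lemma:char_rate_function} at a (near-)minimizer, which correctly sidesteps the fact that $\partial\D^+$ has empty interior so the open-set LDP lower bound by itself yields nothing. One cosmetic remark: the annealed passage ``the exponential rate of the sum equals the maximum over $x\in\partial R_n$ of the rate of $P_0(X_n=x)$'' is slightly informal since the maximizing set grows with $n$; the clean route is the same two-sided argument you give for the quenched side (or simply the direct identity $P_0(X_n\in\partial R_n)=c^n$), so this is not a gap.
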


\noindent The proof of this lemma is standard (c.f. Lemma 16.12 in \cite{rassoul2015ldpbook}). As a corollary, 
we obtain the characterization of localization/delocalization in terms of the difference between 
the infima of the quenched  and annealed rate functions:

\begin{corollary}\label{cor:cor1}
For an RWRE that satisfies Assumption \ref{assumpt:iid_ue}, we have localization at the boundary if and only if 

\begin{equation*}
\inf_{x\in \partial \D^+}I_{a}(x)<	\inf_{x\in \partial \D^+}I_{q}(x)
\end{equation*}
\end{corollary}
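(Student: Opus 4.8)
The plan is to combine the three preceding results directly. From Theorem \ref{th:loc_equiv_crit}, localization at the boundary is equivalent to $p<\lambda$, while delocalization is equivalent to $p=\lambda$; moreover, since $p\le\lambda$ always holds (Jensen), exactly one of these two cases occurs $\mathbb{P}$-a.s. Lemma \ref{lemma:existence_free_energy} identifies these free energies with the infima of the rate functions, namely $p=-\inf_{x\in\partial\D^+}I_q(x)$ and $\lambda=-\inf_{x\in\partial\D^+}I_a(x)$. Substituting, the condition $p<\lambda$ becomes $-\inf_{x\in\partial\D^+}I_q(x)<-\inf_{x\in\partial\D^+}I_a(x)$, which rearranges to $\inf_{x\in\partial\D^+}I_a(x)<\inf_{x\in\partial\D^+}I_q(x)$. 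This is precisely the asserted characterization of localization.

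Concretely, I would first invoke Lemma \ref{lemma:existence_free_energy} to rewrite both $p$ and $\lambda$, noting in particular that this is where the deterministic nature of $p$ (inherited from the deterministic $I_q$ of Varadhan) gets used, so the statement $p<\lambda$ is genuinely an almost-sure dichotomy and not merely an event of intermediate probability. Second, I would apply part (i) of Theorem \ref{th:loc_equiv_crit} to pass from ``$p<\lambda$'' to ``localized at the boundary'', and record that part (ii) together with $p\le\lambda$ guarantees the complementary equivalence for delocalization. Multiplying the inequality $p<\lambda$ by $-1$ flips it to the stated form, and the proof is complete in a couple of lines.

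There is essentially no obstacle here: the corollary is a bookkeeping consequence of Theorem \ref{th:loc_equiv_crit} and Lemma \ref{lemma:existence_free_energy}, and all substantive content (the martingale/second-moment comparison in Proposition \ref{prop:Wn_In_relation}, the large-deviation identifications of $p$ and $\lambda$) has already been established. If anything requires a word of care, it is merely observing that the sign reversal sends the strict inequality $p<\lambda$ to $\inf I_a<\inf I_q$ without changing its strictness, and that ``localized or delocalized a.s.'' from Theorem \ref{th:loc_equiv_crit} is what licenses phrasing the result as a clean ``if and only if''.
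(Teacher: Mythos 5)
Your argument is correct and is exactly the intended one: the paper states this as an immediate corollary of Theorem \ref{th:loc_equiv_crit} combined with the identities $p=-\inf_{\partial\D^+}I_q$ and $\lambda=-\inf_{\partial\D^+}I_a$ from Lemma \ref{lemma:existence_free_energy}. Nothing is missing, and your note that the a.s.\ dichotomy and determinism of $p$ license the clean ``if and only if'' is the right point of care.
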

\section{Proof of Theorem \ref{th:main1}}
\subsection{Preliminaries for the proof of Theorem \ref{th:main1}}
The method presented here was used by Lacoin \cite{lacoin2010newbounds}, Berger and 
Lacoin \cite{berger2017highTemp} in the directed polymers model, and by Yilmaz and Zeitouni 
\cite{yilmaz2010differing} for random walks in random environment. As the proofs are similar, 
we only mention the main points of them and refer to the papers above for further details. 
More precisely, in \cite{yilmaz2010differing}, the analog of showing that $p<\lambda$ in the 
space-time RWRE setting is showing that for a sufficiently large set  of  points $\theta\in 
\R^d$, 

\begin{equation}\label{eq:yilmaz_zeitouni_analog}
\lim_{n\to\infty}\frac{1}{n}\E \log E_{0,\omega}\left[e^{\langle \theta, X_n\rangle 
-n\log(\phi(\theta))}\right]<0,
\end{equation}

\noindent where $\phi(\theta):=\sum_{e\in V} q(e) e^{\langle \theta, z\rangle}$. Comparing with 
\begin{equation*}
p-\lambda=\lim_{n\to \infty}\frac{1}{n}\E\log [W_{n}]=\lim_{n\to\infty}\frac{1}{n}\E\log 	E_{0,\pi}
\left(e^{\sum_{i=0}^{n-1}\Psi(\omega,X_{i})-n\log(c)}\right),
\end{equation*}

\noindent the main difference between the two models is that the potential $\Psi(\omega,x)$  
is replaced by a tilt that depends on the steps of the walk, namely, $\Psi_{st}
(\theta,e):=e^{\langle \theta,e\rangle}$. This introduces a correlation that, in our case, is not present (cf. the 
paragraph below Eq. \ref{eq:alpha_def}). Thus, it is natural to apply the methods in 
\cite{yilmaz2010differing} to deduce the desired result. We sketch the main ideas and 
differences in the next pages.

First, note that Theorem \ref{th:loc_equiv_crit} implies immediately delocalization when 
\eqref{eq:env_condition} does not hold. Indeed, in this case, $\mathbb{P}$-a.s $\Psi(\omega,x)=\log(c)$ for all $x\in 
\Z^d$, so by  \eqref{eq:free_energy}, $p=\log(c)=\lambda$. Hence, until the end of the proof we assume that  \eqref{eq:env_condition} holds. \\
We define also $\mu:=\hat{E}(\hat{X}_{1})$, where $\{\hat{X}_n\}_{n\in \N}$ is a simple random walk with jumps in $V^+$ and  
law $\hat{P}$ that satisfies 

\begin{equation*}
\hat{P}(\hat{X}_{n+1}=x+e|\hat{X}_n=x)=\frac{q(e)}{\sum_{e'\in V^+}q(e')},~~~x\in \partial R_n, e\in V^+
\end{equation*}

\noindent Consider $N=nm$ with $n$ fixed (but large enough) and $m\to \infty$. Recall that 

\begin{equation*}
W_{N}(\omega)=E_{0,\pi}\left(e^{\sum_{i=0}^{N-1}\Psi(\omega,X_{i})-N\log(c)}\right)
\end{equation*} 

\noindent We define, for $y\in \Z^{d}$,

\begin{equation}\label{def:Jy}
J_{y}:=\left((y-\frac{1}{2})\sqrt{n},(y+\frac{1}{2})\sqrt{n}\right)\subset \R^{d}.
\end{equation}

\noindent Given $Y=(y_{1},\cdots,y_{m})\in (\Z^{d})^{m}$, we decompose 

\begin{equation}\label{eq:Wn_decomp}
W_{N}(\omega)=\sum_{Y}W_{N}(\omega,Y),
\end{equation}

\noindent where 

\begin{equation*}
W_{N}(\omega,Y):=E_{0,\pi}\left(e^{\sum_{i=0}^{N-1}\Psi(\omega,X_{i})-N\log(c)},X_{jn}- jn\mu\in 
J_{y_{j}},\forall j\leq  m\right)
\end{equation*}

\noindent The decomposition in \eqref{eq:Wn_decomp} is valid, since $\Z^{d}\subset \bigcup_{y\in \Lambda}J_{y}$. 
By the inequality $(\sum_{i}a_{i})^{1/2}\leq \sum_{i}a_{i}^{1/2}$, valid for countable indices, we 
obtain 

\begin{equation*}
\E[W_{N}(\omega)^{1/2}]\leq \sum_{Y}\E[W_{N}(\omega,Y)^{1/2}].
\end{equation*}

\noindent This inequality gives us  

\begin{equation}\label{eq:p-lambda_ineq}
p-\lambda=\lim_{N\to \infty}\frac{1}{N}\E\log [W_{N}]\leq \liminf_{N\to \infty}\frac{2}{N}\log 
\E[W_{N}^{1/2}]\leq \liminf_{N\to \infty}\frac{2}{N}\log \left(\sum_{Y}\E[W_{N}(\omega,Y)^{1/2}]
\right).
\end{equation}

\noindent Now we estimate each expectation $\E[W_{N}(\omega,Y)]^{1/2}$, applying the change of measure. The plan 
is the following (recall that $N=mn$ with fixed $n$): for $j\in \{1,\cdots,m\}$, $Y$ fixed and $n$ a square integer, we define 

\begin{equation}
\label{eq:Bj_def}
B_{j}:=\{(z,i)\in \Z^{d}\times \N:(j-1)n\leq i<jn,|z-i\mu -y_{j-1}\sqrt{n}|\leq C_{1}\sqrt{n} \},
\end{equation} 

\noindent where $C_{1}$ is a constant to determine and $y_{0}:=0$.

\subsection{Proof in the case $d=2$}\label{sec:proof_dim_2}

The idea is to define a function that depends on the different blocks $B_{j}$. We define 
\begin{equation}\label{def:D(Bj)}
D(B_{j}):=\sum_{y:(y,|y|_{1})\in B_{j}}\tilde{\omega}(y),~~\text{where}~~~~~\tilde{\omega}
(y):=e^{\Psi(\omega,y)}-c.
\end{equation}

\noindent In particular, $\E[D(B_{j})]=0$, and they form an independent family of random variables. It is important to observe that 
\eqref{ass:env_condition} guarantees that $\tilde{\omega}$ and $D(B_{j})$ are non-degenerate random variables.
We also define $\delta_{n}:=C_{1}^{-1/2}n^{-3/4}$. Note that $\delta_{n}^{2}|D(B_{1})|=O(1)$. Finally, for $K>0$ large 
enough (to determine), define 

\begin{equation*}
f_{K}(u):=-K\mathbbm{1}_{\{u\geq e^{K^{2}} \}},\hspace{0.5cm} g(\omega,Y):=e^{\sum_{j=1}^{m}{f_{K}(\delta_{n}D(B_{j}))}}
\end{equation*}

\noindent By Cauchy-Schwarz inequality, \begin{equation}\label{eq:estim1}
	\begin{aligned}
	&\E[W_{N}(\omega,Y)^{1/2}]=\E[W_{N}(\omega,Y)^{1/2}g(\omega,Y)^{1/2}g(\omega,Y)^{-1/2}]\\&\leq \E[W_{N}
	(\omega,Y)g(\omega,Y)]^{1/2}\E[g(\omega,Y)^{-1}]^{1/2}.
\end{aligned}
\end{equation}

\noindent One can show that for $K$ large enough, $\E[g(\omega,Y)^{-1}]^{1/2}\leq 2^{m}$. To bound $\E[W_{N}
(\omega,Y)g(\omega,Y)]$, we can follow word by word the estimates in Pages 251-252 from \cite{yilmaz2010differing}, to deduce  that 

\begin{equation*}
\E[W_{N}(\omega,Y)^{1/2}]\leq \left(2\sum_{y\in \Z^{2}}\max_{x\in J_{0}}\E E_{x,\pi}\left(e^{\sum_{i=0}
^{n-1}\Psi(\omega,X_i)+f_{K}(\delta_{n}D(B_{1}))-n\log(c)};X_{n}-n\mu\in J_{y} \right)\right)^{m}.
\end{equation*}

\noindent The bound \eqref{eq:p-lambda_ineq} tell us that $p-\lambda<0$ once we are able to prove the following: 

\begin{lemma}
\label{lemma:lemma1}
For $n,K$, and $C_{1}$ large enough,

\begin{equation*}
\sum_{y\in \Z^{2}}\max_{x\in J_{0}}\E E_{x,\pi}\left(e^{\sum_{i=0}^{n-1}\Psi(\omega,X_i)+f_{K}(\delta_{n}
D(B_{1}))-n\log(c)};X_{n}-n\mu\in J_{y} \right)<1/2.
\end{equation*}
\end{lemma}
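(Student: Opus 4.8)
The plan is to decompose the quantity to be estimated into a ``main'' contribution and an ``error'' contribution, and to show that the main term is close to $1$ uniformly in $n$ while the error term is strictly negative and bounded away from $0$, so that their sum drops below $1/2$. Concretely, write the summand as
\[
\E\, E_{x,\pi}\!\left(e^{\sum_{i=0}^{n-1}\Psi(\omega,X_i)-n\log(c)}\bigl(e^{f_{K}(\delta_{n}D(B_{1}))}-1\bigr);X_{n}-n\mu\in J_{y}\right)+\E\, E_{x,\pi}\!\left(e^{\sum_{i=0}^{n-1}\Psi(\omega,X_i)-n\log(c)};X_{n}-n\mu\in J_{y}\right).
\]
Summing the second term over $y\in\Z^{2}$ removes the constraint $X_{n}-n\mu\in J_{y}$ entirely (the $J_{y}$ cover $\Z^{2}$) and, by \eqref{eq:omega_to_pi_rel} and the martingale property, gives exactly $\E[W_{n}]$-type quantity equal to $1$ up to an $o_n(1)$ boundary correction coming from the shift by $x\in J_{0}$ and the centering by $n\mu$; a local CLT for the directed SRW with jumps in $V^{+}$ (which is exactly the walk driven by $\pi$ after annealing, since $\E[e^{\Psi(\omega,X_i)-\log c}\mid\text{path}]=1$) controls this correction uniformly for $x\in J_{0}$.

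The heart of the matter is the first term: the change of measure $f_{K}$ must produce a genuine \emph{gain}. Here I would use that under Assumption \ref{ass:env_condition} the variable $\tilde\omega(y)=e^{\Psi(\omega,y)}-c$ is non-degenerate with mean zero and (by uniform ellipticity) bounded, hence $D(B_{1})=\sum_{(y,|y|_1)\in B_1}\tilde\omega(y)$ is a sum of $\Theta(n^{3/2})$ i.i.d.\ mean-zero bounded variables along the ``typical'' slab, so by the CLT $\delta_n D(B_1)=C_1^{-1/2}n^{-3/4}D(B_1)$ converges in law to a centered Gaussian with variance $\Theta(\sigma^2)$, \emph{uniformly} in $n$; in particular $\P(\delta_n D(B_1)\ge e^{K^2})\ge \rho(K)>0$ for every $n$ large, where $\rho(K)\to$ a positive limit as... no --- rather, for fixed $K$ this probability is bounded below by a constant $\rho_0(K,C_1)>0$ not depending on $n$. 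On that event $e^{f_K(\delta_n D(B_1))}-1=e^{-K}-1$, a fixed negative number. The remaining task is to show that this event is \emph{positively correlated} (or at least not too negatively correlated) with the path measure $E_{x,\pi}(e^{\sum\Psi-n\log c};X_n-n\mu\in J_y)$ after summing over $y$ --- indeed the event $\{\delta_n D(B_1)\ge e^{K^2}\}$ concerns only the environment in the slab $B_1$, and the paths contributing to the polymer weight spend (by the local CLT / a moderate-deviations estimate) an order-$n$ fraction of their time inside $B_1$ with overwhelming probability once $C_1$ is large, so restricting the environment on $B_1$ to this event changes the annealed weight by a factor that is $\le 1+\delta$ on the complement and $\le e^{-K}(1+\delta)$ on a set of annealed weight $\ge \rho_0/2$. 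This yields, after summing over $y$,
\[
\sum_{y\in\Z^2}\max_{x\in J_0}\E E_{x,\pi}\bigl(\cdots;X_n-n\mu\in J_y\bigr)\ \le\ 1+o_n(1)\ -\ \tfrac{\rho_0(K,C_1)}{2}\,(1-e^{-K}),
\]
and choosing first $K$ large (so $1-e^{-K}$ is close to $1$ and $\rho_0$ stabilizes), then $C_1$ large (so the path spends enough time in $B_1$ and the local-CLT error is uniformly small), then $n$ large, makes the right-hand side $<1/2$.

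The step I expect to be the main obstacle is making precise the claim that conditioning the environment in the slab $B_1$ on the rare event $\{\delta_n D(B_1)\ge e^{K^2}\}$ multiplies the annealed polymer weight by approximately $e^{-K}$ on that event rather than by something much closer to $1$ (which would kill the gain) --- equivalently, controlling the second moment / tilted first moment so that the paths really ``feel'' the environment in $B_1$. This is exactly the delicate second-moment computation carried out on pages 251--252 of \cite{yilmaz2010differing}, and the only thing one must check is that their argument goes through verbatim with the step-dependent tilt $\Psi_{st}(\theta,e)$ there replaced by the site potential $\Psi(\omega,x)$ here; because in our model the potential depends only on the site and not on the step, the relevant variables are genuinely independent across disjoint slabs (no correlation term), so the adaptation is if anything simpler, and I would simply cite their estimate after recording this simplification.
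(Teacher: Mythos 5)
Your overall strategy --- a change of measure via $f_K$ that penalizes unusually large $\delta_n D(B_1)$, combined with the observation that because $\Psi$ is a \emph{site} potential the cross-terms that trouble Yilmaz--Zeitouni vanish here --- is the right one, and you correctly identify the second-moment estimate on pp.~251--252 of \cite{yilmaz2010differing} as the crux. The paper's proof is precisely this: follow YZ Section~2.5 and verify the analogue of their display~(2.22), namely that $\E E_{0,\pi}\bigl[e^{\sum\Psi-n\log c}(\sum_i\tilde\omega(X_i)-n\alpha)^2\bigr]=O(n)$ for $\alpha:=(\E[e^{2\Psi(\omega,0)}]-c^2)/c>0$, which is where Assumption~\ref{ass:env_condition} enters; the off-diagonal term then vanishes outright by independence (a simplification over YZ, who need a positive-correlation argument).

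However, the quantitative core of your argument has a genuine gap, at two connected points. First, the ``main'' term does \emph{not} sum to $1+o_n(1)$: because the $\max_{x\in J_0}$ sits \emph{inside} the sum over $y$, the local CLT gives $\sum_y\max_{x\in J_0}\hat P_x(\hat X_n-n\mu\in J_y)\approx\sum_y e^{-c\,\mathrm{dist}(y,[-\frac12,\frac12]^{d-1})^2}$, which is a constant $C_0>1$, not $1$. Second, and more fundamentally, you estimate the gain from the tilt by bounding $\P(\delta_n D(B_1)\ge e^{K^2})\ge\rho_0(K,C_1)>0$ \emph{under} $\mathbb P$; since $\delta_n D(B_1)$ is asymptotically centred Gaussian with $O(1)$ variance, $\rho_0(K)$ decays like $\exp(-\Theta(e^{2K^2}))$ as $K\to\infty$, which crushes the factor $(1-e^{-K})$. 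The resulting additive gain $\tfrac{\rho_0}{2}(1-e^{-K})$ therefore tends to $0$ as $K$ grows and can never overcome the excess $C_0-1>0$, let alone push the sum below $1/2$. The actual mechanism is different in kind: one must show that under the \emph{tilted} measure $\E[W_n^y\,\cdot\,]/\E[W_n^y]$, the event $\{\delta_n D(B_1)\ge e^{K^2}\}$ has probability $\to 1$ as $n\to\infty$ (for fixed $K,C_1$). This holds because the on-path contribution to $D(B_1)$ has tilted mean $\approx n\alpha$ with $\alpha>0$, so $\delta_n D(B_1)$ has tilted mean $\approx\alpha C_1^{-1/2}n^{1/4}\to\infty$ while retaining $O(1)$ tilted standard deviation --- and this is exactly what the $O(n)$ second-moment bound with the correct centering $n\alpha$ (not $\alpha$, and not $0$) delivers via Chebyshev. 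That turns the reduction into a \emph{multiplicative} factor $\approx e^{-K}$: $\E[W_n^y e^{f_K}]\le(e^{-K}+o_n(1))\E[W_n^y]$ for the $|y|\lesssim C_1$ contributing to the bulk, and the tail in $y$ is killed by taking $C_1$ large. Then $C_0 e^{-K}+o_{n}(1)+O(e^{-cC_1^2})<1/2$ for $K$, $C_1$, $n$ chosen in that order. In short: your ``positive correlation'' heuristic points in the right direction, but the gain must be computed under the size-biased measure, not under $\mathbb P$, and it is the positivity of the tilted mean $\alpha$ --- not mere non-degeneracy of $\tilde\omega$ --- that makes the change of measure effective.
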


\noindent The proof of the lemma above is followed almost exactly from Subsection 2.5 in 
\cite{yilmaz2010differing}. The main difference rests in display (2.22) in the aforementioned paper. In our case, 
we need to check that for some $\alpha>0$, 
\begin{equation*}
\E E_{0,\pi}\left[e^{\sum_{i=0}^{n-1}\Psi(\omega,X_{i})-n\log(c)}\left(\sum_{i=0}^{n-1}\tilde{\omega}(X_{i})-
\alpha\right)^{2}\right]
\end{equation*}
is $O(n)$.

\noindent We can decompose it as 

\begin{equation}\label{eq:eq10}
	\begin{aligned}
	&\sum_{j=1}^{n-1}\E E_{0,\pi}\left[e^{\sum_{i=0}^{n-1}\Psi(\omega,X_{i})-n\log(c)}(\tilde{\omega}(X_{j})-
	\alpha)^2\right]+\\ 
	&2\sum_{1\leq \ell<j\leq n-1}\E E_{0,\pi}\left[e^{\sum_{i=0}^{n-1}\Psi(\omega,X_{i})-n\log(c)}(\tilde{\omega}
	(X_{\ell})-\alpha)(\tilde{\omega}(X_{j})-\alpha)\right]
	\end{aligned}
\end{equation}
	
\noindent The first term is  $n\E E_{0,\pi}\left[e^{\Psi(\omega,X_1)-\log(c)}\left(\tilde{\omega}(X_1)-
\alpha\right)^2\right]$. As $c_{n}:=\frac{\delta_{n}^{2}}{(\mu n\delta_{n}-A_{n}-e^{K^{2}})^{2}}=O(n^{-2})$, this 
expression vanishes as $n\to \infty$.  On the other hand, if we choose 

\begin{equation}\label{eq:alpha_def}
\alpha:=\E E_{0,\pi}\left[e^{\Psi(\omega,X_1)-\log(c)}\tilde{\omega}(X_1)\right]=\frac{\E\left[e^{2\Psi(\omega,0)}
\right]-c^2}{c}>0~ \text{ by } \eqref{ass:env_condition},
\end{equation} 

\noindent then by independence the second term in \eqref{eq:eq10} is zero. By comparison, the analog of $\alpha$ 
(called $\mu$ in \cite{yilmaz2010differing}) is greater than zero due to a positive correlation that in our case is not 
needed.

Combining the previous results, such election of constants help us to deduce that Lemma 
\ref{lemma:lemma1} is true, and therefore $p-\lambda<0$.\qed

\subsection{Proof in case $d=3$}\label{sec:proof_dim_3}

In this case, the proof in principle is essentially the same, but some technical details needs to be adapted to 
this situation. In particular, we need to redefine $\delta_n$ and $D(B_j)$, namely

\begin{equation*}
\delta_n:=n^{-1}(\log n)^{-1/2},\hspace{0.5cm} D(B_{j}):=\sum_{\substack{y,z\\ (y,i),(z,j)\in B{j}}}V(y,z)\tilde{\omega}(y)
\tilde{\omega}(z),
\end{equation*}

\noindent where $\tilde{\omega}$ is defined as in \eqref{def:D(Bj)}, and 
\begin{equation*}
V(y,z):=\frac{1}{| i-j |}\mathbbm{1}_{\{  |y-z- (i-j)\mu|<C_{2}\sqrt{|i-j|}  \}}\text{ if } i\not=j,\text{ and } 
0\text{ otherwise},
\end{equation*}

\noindent for some constant $C_{2}$ to determine. The proof of Theorem 1.6 in \cite{yilmaz2010differing} can be 
followed almost word by word, and our case is a little bit simplified since the correlation issue is 
not present, as in the $d=2$ case. Details are omitted.\qed

\section{Phase transition }\label{sec:proof_dim_4}

Recall the parametrization of the environments $(\omega_{\eps})_{\eps\in [0,1)}$ (cf. Eq. \ref{def:param_env}). We 
will denote by $p(\eps)$ to the limit in \eqref{eq:free_energy} with environment $\omega_{\eps}$. On the other hand, 
$\lambda$ is constant over $\eps$, and it is equal to $\log(\sum_{e\in V^{+}}\alpha(e))$.  
The first part of Proposition \ref{th:main2} is consequence of the lemma below:

\begin{lemma}\label{lemma:d_4:1}
For each $n\in \N$, the map \begin{equation*}
\eps\in [0,\eps_{max}]\to \frac{1}{n}\left[\E\log P_{0,\omega_{\eps}}(X_{n}\in \partial R_{n})-\log P_{0}(X_{n}\in 
\partial R_{n}) \right]\text{ is non-increasing.}
\end{equation*} 
\end{lemma}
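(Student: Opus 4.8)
The plan is to show that the derivative in $\eps$ of the displayed quantity is non-positive for every $n$, which by the fundamental theorem of calculus gives monotonicity. Write $F_n(\eps):=\E\log P_{0,\omega_\eps}(X_n\in\partial R_n)-\log P_0(X_n\in\partial R_n)$. The second term does not depend on $\eps$, so I only need to control $\frac{d}{d\eps}\E\log P_{0,\omega_\eps}(X_n\in\partial R_n)$. By \eqref{eq:omega_to_pi_rel} we have $P_{0,\omega_\eps}(X_n\in\partial R_n)=E_{0,\pi_\eps}(e^{\sum_{i=0}^{n-1}\Psi(\omega_\eps,X_i)})=\sum_{z\in R_n}\prod_{i=0}^{n-1}\omega_\eps(z_{i-1},\triangle z_i)$, i.e. the sum over directed paths $z$ to $\partial R_n$ of the product of one-step weights $\omega_\eps(x,e)=\alpha(e)(1+\eps\xi(x,e))$. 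Since $n$ is finite this is a genuine (not limiting) finite sum of finite products, so differentiation under the expectation is harmless and one simply gets, by the product rule, that
\[
\frac{d}{d\eps}\log P_{0,\omega_\eps}(X_n\in\partial R_n)=\sum_{x\in\Z^d,e\in V^+}\frac{\alpha(e)\xi(x,e)}{\omega_\eps(x,e)}\,P^{\omega_\eps}_{0,n}\!\big(X \text{ steps from } x \text{ to } x+e\big),
\]
where $P^{\omega_\eps}_{0,n}$ is the quenched polymer measure of \eqref{def:queched_polymer_measure}; equivalently this is $\sum_{x,e}\xi(x,e)\,\partial_{\omega(x,e)}\log Z_n(\omega_\eps)\cdot\alpha(e)$ with $Z_n$ the partition function. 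The key point is then to take the $\E$-expectation and exploit that $\xi(x,e)$ is independent of the weights at all \emph{other} sites while being mean zero.

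The main step is a covariance/FKG-type computation. After differentiating, $\frac{d}{d\eps}F_n(\eps)=\sum_{x,e}\E\big[\xi(x,e)\,G_{x,e}(\omega_\eps)\big]$ where $G_{x,e}(\omega_\eps):=\alpha(e)\,\omega_\eps(x,e)^{-1}\,P^{\omega_\eps}_{0,n}(X\text{ uses edge }(x,x+e))$. Since $\E[\xi(x,\cdot)]=0$ and $\xi(x,\cdot)$ is independent of $\{\xi(y,\cdot):y\neq x\}$, I can replace $G_{x,e}$ by $G_{x,e}(\omega_\eps)-\E_{x}[G_{x,e}(\omega_\eps)]$, where $\E_x$ integrates out only the randomness at $x$; thus $\E[\xi(x,e)G_{x,e}]=\E\big[\operatorname{Cov}_{x}(\xi(x,e),G_{x,e})\big]$, a conditional covariance in the single variable $\omega_\eps(x,\cdot)$. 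Now I claim that for fixed environment off $x$, the map $\omega(x,\cdot)\mapsto G_{x,e}(\omega)$ is \emph{non-increasing} in $\omega(x,e)$: increasing the weight $\omega(x,e)$ (keeping the others fixed, hence necessarily decreasing some $\omega(x,e')$ to stay in $\mathcal P$ — or, cleaner, working in the parametrization where the $\xi(x,e)$ are the free coordinates subject to $\sum_e\alpha(e)\xi(x,e)=0$) increases the denominator $\omega(x,e)$ faster than it increases the edge-traversal probability, because $P^{\omega}_{0,n}(\text{edge }(x,x+e))\le 1$ and, more precisely, the polymer probability of using that edge is a ratio whose numerator scales linearly in $\omega(x,e)$ while the normalizing partition function also grows with $\omega(x,e)$. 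This should make $G_{x,e}$ a non-increasing function of $\xi(x,e)$ along the one-dimensional family, so the conditional covariance $\operatorname{Cov}_x(\xi(x,e),G_{x,e})\le 0$, whence $\frac{d}{d\eps}F_n(\eps)\le 0$.

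The hard part will be making the monotonicity claim on $G_{x,e}$ rigorous, because perturbing $\omega(x,\cdot)$ moves it inside the simplex $\mathcal P$ and one must track several coordinates simultaneously; the clean way is to compute $\partial_{\eps}$ directly as a sum over pairs of edges at $x$ and use the constraint $\sum_{e}\alpha(e)\xi(x,e)=0$ to show the resulting quadratic form in $(\xi(x,e))_e$ has the right sign in expectation. Concretely, I expect the per-site contribution to reduce, after using $\E[\xi(x,e)]=0$ and independence, to $-\E\big[\operatorname{Var}\text{-like term}\big]\le 0$: something of the shape $-\E\big[\pi^{\omega_\eps}_{0,n}(x)\,\mathrm{Var}_{\nu_{x}}(\ldots)\big]$ where $\pi^{\omega_\eps}_{0,n}(x)$ is the polymer occupation probability of $x$ and the inner variance is over the local weight law. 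Once the sign of the single-site conditional covariance is settled, summing over $x$ and $e$ and integrating in $\eps$ from $0$ to any $\eps'\le\eps_{\max}$ finishes the proof. An alternative, if the direct sign computation is unpleasant, is to note $F_n(\eps)=\E\log W^{(\eps)}_n$ up to the $\eps$-independent constant $-\log P_0(X_n\in\partial R_n)=-n\log c$, and to use that $W^{(\eps)}_n$ is, for each path, an affine function of the independent mean-zero variables $\eps\xi(x,e)$, so that $\E\log W^{(\eps)}_n$ is a concave-type functional whose $\eps$-derivative at $\eps$ equals $\eps^{-1}$ times the expected "centered log-derivative", again manifestly $\le 0$ by Jensen applied site-by-site; I would present whichever of these two routes yields the shorter estimate.
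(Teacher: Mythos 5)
Your derivative strategy is the natural route, but you leave the decisive sign computation undone, and the first route you sketch for it is flawed. You want $\operatorname{Cov}_x(\xi(x,e),G_{x,e})\le 0$ to follow from ``$G_{x,e}$ is non-increasing in $\xi(x,e)$,'' but the coordinates $\xi(x,e)$, $e\in V$, are \emph{not} independent under $\Q$: they are constrained to lie in $\mathcal E_\alpha$, so in particular $\sum_{e\in V}\alpha(e)\xi(x,e)=0$ and $\sup_e|\xi(x,e)|=1$. Pointwise monotonicity in the single coordinate $\xi(x,e)$ does not give a sign for a covariance under a dependent joint law, so the per-edge FKG-type step is not available in this generality. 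Your second alternative (``Jensen applied site-by-site'') is too vague as stated to assess and does not obviously produce a proof either.

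The fix --- which you allude to with ``compute $\partial_\eps$ directly as a sum over pairs of edges at $x$'' but do not carry out --- is to aggregate over $e\in V^+$ at each site \emph{before} extracting the sign. Write $Z_n:=P_{0,\omega_\eps}(X_n\in\partial R_n)$. For a site $x$ with $|x|_1<n$, decompose $Z_n$ according to whether the directed path passes through $x$: with $a$ the sum of path-weights from $0$ to $x$, $b_{e'}$ the sum of path-weights from $x+e'$ to $\partial R_n$, and $C$ the contribution of paths avoiding $x$, one has $Z_n=a\sum_{e'\in V^+}\alpha(e')(1+\eps\xi(x,e'))b_{e'}+C=D_x+\eps S_x$, where $D_x:=a\sum_{e'}\alpha(e')b_{e'}+C>0$ and $S_x:=a\sum_{e'}\alpha(e')\xi(x,e')b_{e'}$ are both independent of $\eps$, and $D_x$ is independent of $\xi(x,\cdot)$ (directed paths visit $x$ at most once, at time $|x|_1$, so $a$, $b_{e'}$, $C$ involve only weights at other sites). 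Starting from your formula $\frac{d}{d\eps}\log Z_n=\sum_{x,e}\xi(x,e)G_{x,e}$ with $G_{x,e}=a\alpha(e)b_e/Z_n$, summing over $e\in V^+$ gives the site-$x$ contribution $S_x/Z_n=S_x/(D_x+\eps S_x)$. Since $\E_x[S_x]=0$ and $D_x$ is independent of $\xi(x,\cdot)$,
\[
\E_x\Bigl[\frac{S_x}{D_x+\eps S_x}\Bigr]
= \E_x\Bigl[\frac{S_x}{D_x+\eps S_x}-\frac{S_x}{D_x}\Bigr]
= -\eps\,\E_x\Bigl[\frac{S_x^2}{D_x(D_x+\eps S_x)}\Bigr]\ \le\ 0,
\]
because $D_x>0$ and $D_x+\eps S_x=Z_n>0$. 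Summing over $x$ and taking $\E$ yields $\frac{d}{d\eps}\E\log Z_n\le 0$ on $[0,\eps_{max}]$, which is the lemma. So your outline points in the right direction, but it has a real gap at its center; completing it requires this per-site aggregation over $e\in V^+$ rather than a per-edge covariance inequality.
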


\noindent This is an easy adaptation of Lemma 8.1 in \cite{bazaes2019rwredisorder}. If we let $n$ to infinity, then 
we deduce that $p(\eps)-\lambda(\eps)$ is non-increasing. To finish the proof, define 

\begin{equation*}
\overline{\eps}:=\inf\{ \eps\in (0,\eps_{max}]:p(\eps)-\lambda(\eps)<0 \},
\end{equation*}

\noindent with the convention that $\inf\emptyset=\eps_{max}$.\\
The rest of this section is devoted to prove  $(iii)$ of Proposition \ref{th:main2}. The main ingredient to show  
the first part in $(iii)$ is the next lemma, a particular case of Lemma 6.1 with $\theta=0$ in 
\cite{bazaes2019rwredisorder}.

\begin{lemma}\label{lemma:d_4:2}
If $\eps>0$ is small enough, then $\sup_{n}|W_{n}^{2}|_{2}<\infty$.
\end{lemma}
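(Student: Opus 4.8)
Throughout, $\E$ denotes expectation over the disorder and $W_n:=W_n(\omega_\eps)$, so the claim reads $\sup_n\E[W_n^4]<\infty$. The plan is the standard replica (second-moment) computation from the directed polymer literature; the point that makes it work here is that this parametrization yields a \emph{deterministic} reference walk. By \eqref{eq:omega_to_pi_rel} with $A=\Omega$ one has $W_n(\omega)=c^{-n}P_{0,\omega}(X_n\in\partial R_n)$, and since a nearest-neighbour path of length $n$ ends in $\partial R_n$ iff all of its increments lie in $V^+$, expanding $P_{0,\omega_\eps}(X_n\in\partial R_n)$ over directed paths and substituting $\omega_\eps(y,e)=\alpha(e)(1+\eps\xi(y,e))$ converts the deterministic factor $c^{-n}\prod_i\alpha(\Delta z_i)$ into the law $\hat P$ of the directed walk $\hat X$ with step distribution $\alpha(e)/c$ on $V^+$ (the reference walk from the proof of Theorem~\ref{th:main1}, since $q(e)=\alpha(e)$ here). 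Thus
\[
W_n(\omega_\eps)=\hat E\Big[\prod_{i=0}^{n-1}\big(1+\eps\,\xi(\hat X_i,\hat X_{i+1}-\hat X_i)\big)\Big],
\]
a genuine directed polymer with weights i.i.d.\ over sites, in which the law of $\hat X$ does not depend on the environment.

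\textbf{Second moment.} Let $\hat X,\hat X'$ be two independent copies of $\hat X$ with joint expectation $\hat E^{\otimes2}$. A directed walk satisfies $|\hat X_i|_1=i$, so it visits each site at most once and occupies the site $y$ exactly at time $|y|_1$; hence two such paths can share a site only at a common time, while all other $\xi$-factors involve pairwise distinct sites. Using $\E[\xi(y,\cdot)]=0$ and the independence of $\{\xi(y,\cdot)\}_y$, the expectation over $\xi$ factorizes over sites and only common sites survive:
\[
\E[W_n^2]=\hat E^{\otimes2}\Big[\prod_{\substack{0\le i\le n-1\\ \hat X_i=\hat X_i'}}\big(1+\eps^2\rho(\hat X_{i+1}-\hat X_i,\ \hat X'_{i+1}-\hat X'_i)\big)\Big],\qquad \rho(e,e'):=\E[\xi(0,e)\xi(0,e')].
\]
As $|\rho|\le1$ (Cauchy--Schwarz and $|\xi|\le1$), every factor is $\le 1+\eps^2$, so $\E[W_n^2]\le\hat E^{\otimes2}\big[(1+\eps^2)^{L_n}\big]$, where $L_n:=\#\{0\le i\le n-1:D_i=0\}$ counts the visits to $0$ before time $n$ of the difference walk $D_i:=\hat X_i-\hat X_i'$.

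\textbf{Transience and conclusion.} The increments of $D$ are differences of two independent $V^+$-valued steps: symmetric, bounded, mean-zero, and generating the hyperplane lattice $H:=\{x\in\Z^d:\sum_j x_j=0\}\cong\Z^{d-1}$, on which $D$ is irreducible and aperiodic (the zero increment has positive probability). By the recurrence/transience dichotomy, $D$ is transient precisely because $\dim H=d-1\ge3$, i.e.\ $d\ge4$; writing $\beta<1$ for its return probability to $0$, the total local time $L_\infty:=\#\{i\ge0:D_i=0\}$ is a.s.\ finite and geometric, $\hat E^{\otimes2}[s^{L_\infty}]=s(1-\beta)/(1-s\beta)$ for $1\le s<\beta^{-1}$. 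Choosing $\eps$ small enough that $(1+\eps^2)\beta<1$ and using $L_n\le L_\infty$,
\[
\sup_n\E[W_n^2]\ \le\ \hat E^{\otimes2}\big[(1+\eps^2)^{L_\infty}\big]\ =\ \frac{(1+\eps^2)(1-\beta)}{1-(1+\eps^2)\beta}\ <\ \infty .
\]
For the fourth moment as stated, the same scheme with four independent copies $\hat X^{(1)},\dots,\hat X^{(4)}$ applies: a site visited by $m\le4$ of them contributes a $\xi$-factor at most $1+C\eps^2$ ($C$ depending only on the bounded moments of $\xi$; the orders $\eps,\eps^3,\dots$ either vanish by mean-zero or are absorbed into $\eps^2$), and bounding the number of multiply-visited sites by $\sum_{k<l}L^{(k,l)}_n$ (with $L^{(k,l)}$ the local time at $0$ of $\hat X^{(k)}-\hat X^{(l)}$) and applying H\"{o}lder over the $\binom{4}{2}=6$ pairs reduces everything to six copies of the two-replica bound, finite for $\eps$ small.

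The one essential ingredient is the transience of the difference walk on the $(d-1)$-dimensional hyperplane lattice, valid exactly when $d\ge4$ --- this is where the dimension restriction is used (consistently, for $d=2,3$ that walk is recurrent, $L_\infty=\infty$, matching the localization of Theorem~\ref{th:main1}). The step that requires care is the bookkeeping in the second moment: it is directedness --- each site used once, at a prescribed time --- that forces common sites of the two replicas to occur synchronously, making the overlap a genuine local time of a single random walk; beyond that, and the elementary estimate $|\rho|\le1$ (and, for $L^4$, the uniformity of $C$ in $m\le4$), I do not anticipate a real difficulty.
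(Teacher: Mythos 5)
Your proof is correct and follows the same second-moment (replica) computation that the paper invokes by citing Lemma 6.1 of Bazaes et al.\ (the remark closing Section~4, which points to pp.~36--37 of Comets's lecture notes, makes the intended method explicit). The three ingredients you identify --- rewriting $W_n$ as a directed-polymer partition function over a deterministic reference walk with step law $\alpha(e)/c$, the observation that directedness forces any common site of two replicas to be visited at equal times so that the overlap is a genuine local time, and the transience of the difference walk on the $(d-1)$-dimensional hyperplane lattice precisely when $d\ge 4$ --- are exactly the ingredients of the cited argument.
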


\noindent Recall the following:

\begin{equation*}
W_{\infty}(\eps):=W_{\infty}(\omega_{\eps})>0\to p(\eps)=\lambda(\eps)\leftrightarrow \text{delocalization}.
\end{equation*}

\noindent Indeed, If $W_{\infty}>0$, then $\log(W_{\infty})=\lim_{n\to \infty}\log(W_{n})<\infty$, so 

\begin{equation*}
p(\eps)=\lim_{n\to \infty}\frac{1}{n}\log P_{0,\omega_{\eps}}(X_{n}\in \partial R_{n})=\lim_{n\to \infty}\frac{W_{n}
(\omega_{\eps})}{n}+\lambda(\eps)=\lambda(\eps).
\end{equation*}

\noindent Now pick $\eps>0$ small enough such that $\sup_{n}|W_{n}^{2}|_{2}<\infty$ as in Lemma \ref{lemma:d_4:2}, and 
call it $\eps^{*}$. By the martingale convergence theorem, $W_{n}(\eps^{*})\to W_{\infty}(\eps^{*})$ a.s. and in 
$L^{2}$. As $|W_{n}|_{2}=1$ for all $n$, then we necessarily have $W_{\infty}(\eps^{*})>0$, and therefore 
$p(\eps^{*})=\lambda(\eps^{*})$. But the map $\eps\to p(\eps)-\lambda(\eps)$ is non-increasing, so $p=\lambda$ on $
[0,\eps^{*}]$, and thus $\overline{\eps}\geq\eps^{*}>0$.\\

\subsection{An example on which $\overline{\eps}<\eps_{max}$}\label{sec:example}

For simplicity, we consider $d=4$, and i.i.d random variables $(\xi(x))_{x\in \Z^d}\in \Gamma_{\alpha}$ such that $
\xi(x,e)=\xi(x,e')$ for all $e,e'\in V^+$ and $\xi(x,-e)=-\xi(x,e)$. Also, for $i=1,\cdots, d$, define $
\alpha(e_i)=\alpha(-e_i):=\frac{y_i}{2\sum_{i=1}^{d}y_i}$, where $y=(y_1,\cdots,y_d)\in \partial \D^+$ is a point to determine. 
Therefore,  we have 

\begin{equation*}
\omega_\eps(x,e_i)=\alpha(e_i)(1+\eps\xi(x))~~~~e_i\in V^+
\end{equation*}

\noindent Moreover, assume that the distribution of $\xi(0)$ under $\Q$ is the Rademacher distribution, namely, $
\Q(\xi(0)=1)=\Q(\xi(0)=-1)=\frac{1}{2}$. By Corollary \ref{cor:cor1}, localization occurs as soon as

\begin{equation}\label{eq:eq13}
\inf_{x\in \partial \D^+}I_{a}(x)<	\inf_{x\in \partial \D^+}I_{q}(x)
\end{equation}
However, in this case, the infimum on the left is exactly $I_a(y)$, and it is achieved only at this point. On 
the other hand, by the continuity of $I_q$, the infimum on the right is also achieved at some point $\overline{x}\in 
\partial\D^+$. If $\overline{x}\not=y$, then $I_a(y)<I_a(\overline{x})\leq I_q(\overline{x})$ so we are done. Thus, 
we assume that $\overline{x}=y$. In that case, we decompose $-I_q(y)$ as 

\begin{align}
-I_q(y)&=-I_a(y)+\lim_{n\to\infty}\frac{1}{n}\log \left(\frac{\sum_{0=x_0,x_1,\cdots,x_n=y_n}\prod_{i=1}^{n}
q(\triangle x_i)(1+\eps\xi(x_{i-1}))}{\sum_{0=x_0,x_1,\cdots,x_n=y_n}\prod_{i=1}^{n}q(\triangle x_i)}\right)
\nonumber\\
&\leq -I_a(y)+\limsup_{n\to\infty}\max_{0=x_0,x_1,\cdots,y_n}\frac{1}{n}\sum_{i=1}^{n}\log(1+\eps\xi(x_{i-1}))
\label{eq:eq14},
\end{align}

\noindent where $(y_n)_{n\in \N}$ is any sequence such that $y_n\in \partial R_n$ and $\frac{y_n}{n}\to y$ as $n\to 
\infty$. 
Also, the sum and maximum above are over all directed paths $0=x_0,x_1,\cdots,x_n $ such that $x_n=y_n$. Denote by 
$C(y_n)$ to the number of such paths. It's easy to check that there exists some constant $C>0$ such that, for all 
$n\in \N,C(y_n)\leq Ce^{nf(y)-\frac{d-1}{2}\log n}$, where $f(y)=-\sum_{i=1}^{d}y_i\log(y_i)$. To estimate the maximum above, we can use 
Hoeffding inequality (cf. Theorem 
2.8 in \cite{boucheron2013concentration}) to get, for $a>0$,
 \begin{equation}\label{eq:eq15}
\mathbb{P}\left(\sum_{i=1}^{n}\log(1+\eps\xi(x_{i-1}))-n\E[\log(1+\eps\xi(0))]>na\right)\leq 
\exp\left(\frac{-2na^2}{\log\left(\frac{1+\eps}{1-\eps}\right)^2}\right)
\end{equation} 

\noindent Therefore, 

\begin{align*}
&\sum_{n=1}^{\infty}\mathbb{P}\left(\max_{0=x_0,x_1,\cdots,y_n}\sum_{i=1}^{n}\log(1+\eps\xi(x_{i-1}))-n\E[\log(1+
\eps\xi(0))]>na\right)\\
&\leq \sum_{n=1}^{\infty} C(y_n)\exp\left(\frac{-2na^2}{\log\left(\frac{1+\eps}{1-\eps}\right)^2}\right)<\infty
\end{align*}

\noindent as soon as $a>\log\left(\frac{1+\eps}{1-\eps}\right)\sqrt{f(y)/2}$. By Borel-Cantelli's lemma, 
\eqref{eq:eq14} is bounded by \begin{align*}
&-I_a(y)+\log\left(\frac{1+\eps}{1-\eps}\right)\sqrt{f(y)/2}+\E\left[\log(1+\eps\xi(0))\right]\\
&=-I_a(y)+\log\left(\frac{1+\eps}{1-\eps}\right)\sqrt{f(y)/2}+\frac{1}{2}\left(\log(1+\eps)+\log(1-\eps)\right)
\end{align*}
If $f(y)\leq \frac{9}{50}$, then $\sqrt{f(y)/2}\leq \frac{3}{10}$, and the last display is strictly smaller than $-
I_a(y)$ at least for $\eps>\frac{9}{10}$. The required value for $f(y)$ can be achieved, for example, choosing the 
vector $y=\left(\frac{97}{100},\frac{1}{100},\frac{1}{100},\frac{1}{100}\right)$, so in this case, we can choose $
\eps_{max}\approx\frac{9}{10}$ and have a true phase transition, with  $\kappa\approx\frac{1}{1000}$.\qed

\begin{remark}
The asymmetry in terms of $\alpha$ is needed. Indeed, if $\alpha(e)=\frac{1}{2d}$ for all $e\in V$, then it is 
not difficult to show (cf. pp. 36-37 in \cite{comets2017directedlectures}) that under our setting, $\sup_{n\in \N}
\E[W_n^2]<\infty$, and therefore, $p(\eps)=\lambda$ for all $\eps\in [0,\eps_{max}]$.
\end{remark}

\providecommand{\bysame}{\leavevmode\hbox to3em{\hrulefill}\thinspace}
\providecommand{\MR}{\relax\ifhmode\unskip\space\fi MR }
\providecommand{\MRhref}[2]{%
  \href{http://www.ams.org/mathscinet-getitem?mr=#1}{#2}
}
\providecommand{\href}[2]{#2}



\noindent{\bf Acknowledgement.}The author thanks his advisor Alejandro F. Ram\'{\i}rez and an anonymous referee  for valuable comments and suggestions about the paper.


\end{document}